\pgfplotsset{compat=1.13}
\newtheorem{teo}{Theorem}[section]
\newtheorem{defi}[teo]{Definition}
\newtheorem{prop}[teo]{Proposition}
\newtheorem{lem}[teo]{Lemma}
\newtheorem{cor}[teo]{Corollary}
\theoremstyle{definition}
\newtheorem{oss}[teo]{Remark}
\theoremstyle{definition}
\newtheorem{ex}[teo]{Example}
\newcommand{\bs}{\boldsymbol}
\newcommand{\N}{\mathbb{N}}
\newcommand{\Z}{\mathbb{Z}}
\newenvironment{acknowledgements}%
{\null\vfill\begin{center}%
		\bfseries Acknowledgements\end{center}}%
{\vfill\null}
\newcommand{\keywords}[1]{\emph{Keywords:} #1}
\newcommand{\MSC}[1]{\emph{Mathematics Subject Classification 2010:} #1}
\newcounter{lastnote}
\title{The tree of good semigroups in $\N^2$ and a generalization of the Wilf conjecture} 
\author{N. Maugeri\thanks{\emph{e-mail: nicola.maugeri.1992@gmail.com} \newline \hspace*{1.8em}Part of this work was done while the first author was visiting the Universities of Almeria and Granada supported by \newline \hspace*{1.8em} the project MTM2014-55367-P, which is funded by Ministerio de Economía y Competitividad and Fondo\newline \hspace*{1.8em} Europeo de Desarrollo Regional FEDER, and by the Junta de Andalucía Grant Number FQM-343.}, G. Zito\thanks{ \emph{e-mail: giuseppezito@hotmail.it}
\newline Both the authors was funded by the project "Proprietà algebriche locali e globali di anelli associati a curve e ipersuperfici" PTR 2016-18 - Dipartimento di Matematica e Informatica - Università di Catania".}
}
\date{}
\begin{document}
\linespread{1,0}
\maketitle
\begin{abstract}
\noindent
Good subsemigroups of $\N^d$ have been introduced as the most natural generalization of numerical ones. Although their definition arises by taking in account the properties of  value semigroups of analytically unramified rings (for instance the local  rings of an algebraic curve),  not all good semigroups can be obtained as value semigroups, implying that they can be studied as pure combinatorial objects. In this work, we are going to introduce the definition of length and genus for good semigroups in $\N^d$.
For $d=2$, we show how to count  all the local good semigroups with a fixed genus through the introduction of the tree of local  good subsemigroups of $\N^2$, generalizing the analogous concept introduced in the numerical case. Furthermore, we study the relationships between these elements and others previously defined in the case of good semigroups with two branches, as the type and the embedding dimension.
Finally we extend the well known Wilf conjecture to good semigroups in $\N^2$  drawing conclusions about its validity in this more general context.
\end{abstract}
\keywords{good semigroups, genus of a good semigroup, type of a good semigroup, Wilf conjecture.}\\
\MSC{13A18, 14H99, 13H99, 20M25.}

\section*{Introduction}
The study of good semigroups was formerly motivated by the fact that they are the value semigroups of one-dimensional analytically unramified rings (such as the local rings of an algebraic curve). The definition appeared the first time in \cite{anal:unr} and these objects were widely studied in several works \cite{two:danna, symm:cdk, multibranch:delgado, symm:delgado, semig:garcia}.
In \cite{anal:unr}, the authors proved that the class of good semigroups is actually larger than the one of value semigroups. Thus, such semigroups can be seen as a natural generalization of numerical semigroups and can be studied using a more combinatorial approach without necessarily referring to the ring theory context. In recent works \cite{DAGuMi}, \cite{emb-NG}, \cite{type:good}, some notable elements and properties of numerical semigroups have been generalized to the case of good semigroups.
The main purpose of this work is to generalize the definitions of \emph{length} and \emph{genus} of an ideal of a numerical semigroup to the case of good ideals of a good semigroup studying also the relationships between them and the other objects defined in the previous works in the case of subsemigroups of $\N^2$.\\
If $R$ is an analytically unramified ring, the value semigroup $v(R)=\{\bs{v(r)}$ $|$ $r$ is not a zerodivisor of $R\}$ is a good semigroup \cite{canonical:danna}.
If $I$ is a relative good ideal of $R$, the extension $I\subseteq \bar{I}$ is of finite type and the conductor ideal is $C(I)=I:\bar{I}$, where both closure and colon operations are considered in the ring of total fractions. For a fixed $\bs{\alpha}\in \Z^d$, we denote by $I(\bs{\alpha})=\{r\in R\hspace{0.1cm}|\hspace{0.1cm} v(r)\geq \bs{\alpha}\}$.
If $\bs{c}(v(I))$ is the conductor of the ideal $v(I)$ of $v(R)$, we have $v(I:\bar{I})=v(I(\operatorname{\bs{c}}(I)))$.
In the one-branch case, given a relative ideal $I$ of $R$, we have that the length of the $R$-module $l_R(I/C(I))$ is equal to $n(v(I))$, where $n(v(I))$ is the cardinality of the set of small elements of the numerical semigroup $v(I)$. For this reason, given a relative ideal $E$ of a numerical semigroup $S$, it is natural to call \emph{length} of $E$ the number $n(E)$. On the other hand, the \emph{genus} of $E$ is defined as the number of gaps in $E$ and it is denoted by $g(E)$. It is straightforward that $g(E)+n(E)=\operatorname{c}(E)$.\\
In Section \ref{sec1} we recall the definition of good semigroup and we fix the basic notations. In \cite{canonical:danna} it is defined a function of distance $d$ between relative good ideals in a good semigroup $S$ and it is proved that if $S$ is a semigroup of values of a ring $R$, given a good relative ideal $I$ of $R$, we have $l_R(I/C(I))=d(v(I)\setminus v(C(I)))$. For this reason, taking into account the additivity of the function $d$, we can generalize in a natural way the definition of length and genus to the case of good ideals of $\N^d$ (not necessarily in case of the semigroup of values of a ring) as it was done in \cite{zito2018arf} for Arf semigroups. Given a relative good ideal $E\subseteq S$, we define respectively length and genus of $E$ as $l(E)=d(E\setminus E(\operatorname{\bs{c}}(E)))$ and $g(E)=d(\N^d\setminus E(\operatorname{\bs{c}}(E))$. We conclude the section giving a slightly different version of an explicit method to compute length and genus introduced in \cite{symm:delgado}. In \cite{zito2018arf}, it  was computed the number of good Arf semigroups with $n$ branches having a fixed genus using the untwisted  multiplicity trees defined in \cite{giuseppe:Arf}. Our aim is to obtain a similar result for a general good semigroup. In \cite{Bras-Amoros2008}, it is presented a method to compute all numerical semigroups up to  a fixed genus building a tree where each new level is obtained removing  minimal generators larger than the Frobenius number from the semigroups of the previous level. In Section \ref{sec2}, we repeat the same idea for good semigroups $S\subseteq \N^2$; in this case, the \emph{tracks} of the good semigroup, defined in \cite{emb-NG}, will have the role that minimal generators played in case of numerical one. In order to do this, we prove that every good semigroup of genus $g$ can be obtained removing a track from one of genus $g-1$ (Theorem \ref{teo1}) and that by removing a track from a good semigroup of genus $g$ we obtain a good semigroup of genus $g+1$ (Theorem \ref{genuson}).
Then, we  explain how to build the tree of good semigroups, underlining the differences with the numerical case. We report the results regarding the computation of the number of local good semigroups with a fixed genus up to genus $27$, produced with an algorithm written in "GAP" \cite{GAP4} using the package "NumericalSgps" \cite{GAPNumericalSgps}. In Section \ref{sec3} we study the relationships between genus, length and other notable elements of a good semigroup. 
In \cite{froberg1986numerical}, it was proved the inequality $c(S)\leq (t(S)+1)l(S)$ for numerical semigroups. The \emph{type} of a good semigroup was originally defined in \cite{anal:unr} for the good semigroups such that $S-M$ is a relative good ideal of $S$ and it was recently generalized in \cite{type:good}. We conclude the paper asking if this inequality holds for good semigroups with respect to the generalized version of definition of type.
We definitely prove that length, genus and type satisfy the relationships $t(S)+l(S)-1\leq g(S)\leq t(S)l(S)$ also in the case of good semigroups (Proposition \ref{boundsup} and Corollary \ref{boundinf}).
To conclude the paper we observe that the definitions and the algorithm given in the previous section give us the possibility to introduce an analogue of the Wilf conjecture for good semigroups. In this case, we found counterexamples for the conjecture (Example \ref{countwilf}) but the problem remains open for good semigroups which are value semigroups of a ring.

\section{Length and Genus of a Good Ideal}
\label{sec1}
We begin recalling the definition of good semigroup introduced in \cite{anal:unr}.
\begin{defi}
\label{E1}
A submonoid $S$ of $(\N^d,+)$ is a \emph{good semigroup} if it satisfies the following properties:
\begin{itemize}
\item[(G1)] If $\bs{\alpha}=(\alpha_1,\ldots,\alpha_d), \bs{\beta}=(\beta_1,\ldots,\beta_d)$ belong to $S$, then $\min(\bs{\alpha};\bs{\beta}) :=(\min\{\alpha_1,\beta_1\},\ldots, \min\{\alpha_d,\beta_d\})\in S$;
\item[(G2)] There exists $\bs{\delta} \in \N^d$ such that $\bs{\delta}+\N^d \subseteq S$;
\item[(G3)] If $\bs{\alpha}=(\alpha_1,\ldots,\alpha_d),\bs{\beta}=(\beta_1,\ldots,\beta_d)\in S$; $\bs{\alpha}\neq \bs{\beta}$ and $\alpha_i=\beta_i$ for some $i\in\{1,\ldots,d\}$; then there exists $\bs{\epsilon} \in S$ such that $\epsilon_i>\alpha_i=\beta_i$ and $\epsilon_j\geq \min\{\alpha_j,\beta_j\}$ for each $j\neq i$ (if $\alpha_j\neq \beta_j$, the equality holds).
\end{itemize}
\end{defi}
Furthermore, we always assume to work with a \emph{local} good semigroup $S$, that is, if $\bs{\alpha}=(\alpha_1,\ldots,\alpha_d)\in S$ and $\alpha_i=0$ for some $i\in\{1,\ldots,d\}$, then $\bs{\alpha}=\bs{0}=(0,\ldots,0)$.
As a consequence of property (G2), the element $\bs{c(S)}=\min\{\bs{\delta}\hspace{0.1cm}|\hspace{0.1cm} S\supseteq \bs{\delta}+\N^d\}$ is well defined and it is called the \textit{conductor} of the good semigroup. The element $\bs{f(S)}=\bs{c(S)}-\bs{1}$, where $\bs{1}=(1,\ldots,1)$, is called the \textit{Frobenius vector} of the good semigroup. On the elements of $\N^d$ is naturally defined a partial order relation induced by the usual order on $\N$. Hence, given $\bs{\alpha}=(\alpha_1,\ldots\alpha_d)$, $\bs{\beta}=(\beta_1,\ldots,\beta_d)\in S$, we write $\bs{\alpha}\leq \bs{\beta}$ if $\alpha_i\leq\beta_i$ for all $i\in\{1,\ldots,d\}$. It is possible to show that a good semigroup can be described by a finite set of elements, this set is denoted by $\operatorname{Small}(S)=\{\bs{\alpha}\in S\hspace{0.1cm}|\hspace{0.1cm} \bs{\alpha}\leq \bs{c(S)}\}$ and its elements are called the \textit{small elements} of $S$. The good semigroup having small elements $\{\bs{0}, \bs{1}\}$ is a local good semigroup containing all the other ones, we denote it by $\N^d(1,\ldots,1)$.

Following the notations reported in \cite{canonical:danna}, we recall some definitions.\\
Let $S$ be a good semigroup. If $E\subseteq \Z^d$ is such that $E+S\subseteq E$ and $\bs{\alpha} +E\subseteq S$
for some $\bs{\alpha}\in S$, then $E$ is called a \textit{relative ideal} of $S$. A relative ideal of $S$ needs
not to satisfy the properties (G1) and (G3) of good semigroups.
A relative ideal $E$ that does satisfy properties (G1) and (G3) will be called a \textit{relative good ideal}.
Given a good semigroup $S$ and a relative good ideal $E$, two elements $\bs{\alpha},\bs{\beta}$ in $E$ are \emph{consecutive} if there are no elements $\bs{\gamma} \in E$ such that $\bs{\alpha}<\bs{\gamma}<\bs{\beta}$.\\
An ordered sequence of $n+1$ elements in $E$: $$\bs{\alpha}=\bs{\alpha^{(0)}}<\bs{\alpha^{(1)}}<\ldots<\bs{\alpha^{(n)}}$$  is called a \emph{chain} of \emph{length} $n$ in $E$; furthermore it is called \emph{saturated} in $E$ if all its elements are consecutive in $E$.\\
In \cite{canonical:danna} it is proved that, if $\bs{\alpha},\bs{\beta}\in E$ with $\bs{\alpha}<\bs{\beta}$, then all the saturated chains between $\bs{\alpha}$ and $\bs{\beta}$ have the same length. This common length is denoted by $d_E(\bs{\alpha},\bs{\beta})$.\\
Given two relative good ideals $E\supseteq F$, we denote by $\bs{e(E)},\bs{e(F)}$ the minimal elements of $E$ and $F$ respectively; for $\bs{\alpha}\in \N^d$ big enough one has that $\bs{\alpha}\in E$ and $\bs{\alpha}\in F$. Furthermore, in [7] it is proved that the number $d_E(\bs{e(E)},\bs{\alpha})-d_F(\bs{e(F)},\bs{\alpha})$ is independent of the choice of $\bs{\alpha}$. So we can define
$$d(E\setminus F):=d_E(\bs{e(E)},\bs{\alpha})-d_F(\bs{e(F)},\bs{\alpha}).$$
Given $\bs{\alpha} \in \N^d$, we denote by $E(\bs{\alpha})=\{\bs{\beta}\in E: \bs{\beta}\geq \bs{\alpha}\}$.
The function $d(-\setminus-)$ satisfies the following properties: 
\begin{prop}\cite{canonical:danna}
\label{propBDF}
\begin{enumerate}
    \item[1)] If $E\supseteq F\supseteq G$ are good relative ideals of S, then we have $d(E \setminus G) = d(E \setminus F) + d(F \setminus G)$.
    \item[2)] If $E\supseteq F$ are good relative ideals of S, then $d(E \setminus F) = 0$ if and only if $E=F$.
    \item[3)] Let us consider $E$, a good relative ideal of $S$ and $\bs{\alpha}\in \Z^d$. If $\bs{\alpha}^i=\bs{\alpha}+\bs{e}^i$, where $e_j^i=0$ if $j\neq i$ and $e_i^i=1$, then $d(E(\bs{\alpha})\setminus E(\bs{\alpha}^i))\leq 1$, where  the equality holds if and only if $\{\bs{\beta}\in E \hspace{0.1cm}| \beta_i=\alpha_i \text{ and } \beta_j\geq \alpha_j, \text{ if } j\neq i\}\neq \emptyset$.
\end{enumerate}
\end{prop}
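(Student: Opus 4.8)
The plan is to establish the three statements in order, using throughout that all saturated chains between two elements of a relative good ideal $E$ have the common length $d_E(-,-)$ --- in particular $d_E(\bs\alpha,\bs\beta)$ is the maximal length of an arbitrary chain of $E$ from $\bs\alpha$ to $\bs\beta$, since any chain refines to a saturated one --- together with the defining identity $d(E\setminus F)=d_E(\bs{e(E)},\bs\gamma)-d_F(\bs{e(F)},\bs\gamma)$ for $\bs\gamma$ large. Statement 1) is then formal: fix $\bs\gamma\in\N^d$ large enough to lie in $G$, hence in $F$ and in $E$; evaluate all three distances at this common $\bs\gamma$ and telescope, $d(E\setminus G)=\big(d_E(\bs{e(E)},\bs\gamma)-d_F(\bs{e(F)},\bs\gamma)\big)+\big(d_F(\bs{e(F)},\bs\gamma)-d_G(\bs{e(G)},\bs\gamma)\big)=d(E\setminus F)+d(F\setminus G)$.

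For 2), the implication $E=F\Rightarrow d(E\setminus F)=0$ is immediate ($\bs{e(E)}=\bs{e(F)}$ and $d_E=d_F$). For the converse, I would first note $d(E\setminus F)\ge 0$: a saturated chain of $F$ from $\bs{e(F)}$ to a large $\bs\gamma$ is a chain of $E$, and prefixing a saturated chain of $E$ from $\bs{e(E)}$ to $\bs{e(F)}$ produces a chain of $E$ from $\bs{e(E)}$ to $\bs\gamma$, so $d_E(\bs{e(E)},\bs\gamma)\ge d_F(\bs{e(F)},\bs\gamma)$. Strictness (i.e. $E\supsetneq F\Rightarrow d(E\setminus F)\ge 1$) is then obtained by exhibiting, from a point $\bs\gamma\in E\setminus F$, at least one consecutive pair in $E$ that cannot occur in a saturated chain of $F$: when $\bs{e(E)}\neq\bs{e(F)}$ the pair $\bs{e(E)}\to\bs{e(F)}$ already does the job, and in general the argument uses (G1), (G3) and the rigidity of $F$ near $\bs\gamma$. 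This is routine bookkeeping, not the crux.

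The content is in 3). The starting observation is that $E(\bs\alpha)\setminus E(\bs\alpha^i)=\{\bs\beta\in E:\bs\beta\ge\bs\alpha,\ \bs\beta\not\ge\bs\alpha^i\}=\{\bs\beta\in E:\beta_i=\alpha_i\ \text{and}\ \beta_j\ge\alpha_j\ \text{for}\ j\ne i\}=:A$, and that $E(\bs\alpha)$ and $E(\bs\alpha^i)$ are again relative good ideals, since (G1) and (G3) are preserved when one intersects $E$ with an orthant. If $A=\emptyset$ then $E(\bs\alpha)=E(\bs\alpha^i)$ and $d=0$. Assume $A\ne\emptyset$; then $\bs{e}(E(\bs\alpha))$ has $i$-th coordinate equal to $\alpha_i$, and on a saturated chain of $E(\bs\alpha)$ from $\bs{e}(E(\bs\alpha))$ to a large $\bs N$ the $i$-th coordinate is nondecreasing, starts at $\alpha_i$ and ends strictly above it. Hence there is exactly one step $\bs\mu\to\bs\nu$ of this chain with $\mu_i=\alpha_i<\nu_i$; all elements up to $\bs\mu$ lie in $A$, while the tail from $\bs\nu$ onward lies in $E(\bs\alpha^i)$ and is still saturated there (removing elements cannot create non-consecutive pairs). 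Concatenating a saturated chain of $E(\bs\alpha^i)$ from $\bs{e}(E(\bs\alpha^i))$ to $\bs\nu$ with this tail gives a saturated chain of $E(\bs\alpha^i)$, and comparing lengths yields
\[ d\big(E(\bs\alpha)\setminus E(\bs\alpha^i)\big)=d_{E(\bs\alpha)}\big(\bs{e}(E(\bs\alpha)),\bs\mu\big)+1-d_{E(\bs\alpha^i)}\big(\bs{e}(E(\bs\alpha^i)),\bs\nu\big). \]
So the statement is reduced to the single equality $d_{E(\bs\alpha)}\big(\bs{e}(E(\bs\alpha)),\bs\mu\big)=d_{E(\bs\alpha^i)}\big(\bs{e}(E(\bs\alpha^i)),\bs\nu\big)$.

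That equality is where the work is, and it is the main obstacle: it says that the slice of $E(\bs\alpha)$ on the hyperplane $\{\beta_i=\alpha_i\}$ and the bottom layer of $E(\bs\alpha^i)$ ``carry the same length''. I would prove it by using (G3) to construct, step by step, an order- and distance-preserving correspondence between a saturated chain lying inside $A$ and a saturated chain of $E(\bs\alpha^i)$: at every consecutive step inside $A$ the two endpoints agree in the coordinate $i$, so (G3) yields an element of $E$ lying strictly above them in the $i$-th direction, and taking minima (property (G1)) places one exactly on the corresponding step of the chain in $E(\bs\alpha^i)$; the reverse construction shows that every step of the latter arises this way. Once this correspondence is set up the displayed difference equals $1$, which --- together with the trivial case $A=\emptyset$ --- simultaneously gives the bound $d(E(\bs\alpha)\setminus E(\bs\alpha^i))\le 1$ and the stated criterion for when equality holds.
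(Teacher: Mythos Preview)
The paper does not prove this proposition at all: it is quoted verbatim from \cite{canonical:danna}, so there is no ``paper's own proof'' to compare against. Your arguments for 1) and 2) are correct (the telescoping for 1) is exactly right; 2) is sketchy but the idea is sound).

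For 3) your reduction is valid up to the displayed identity
\[
d\big(E(\bs\alpha)\setminus E(\bs\alpha^i)\big)=d_{E(\bs\alpha)}\big(\bs e(E(\bs\alpha)),\bs\mu\big)+1-d_{E(\bs\alpha^i)}\big(\bs e(E(\bs\alpha^i)),\bs\nu\big),
\]
but the promised ``order- and distance-preserving correspondence via (G3)'' is not actually constructed, and as stated it is unclear how it would work: (G3) only gives \emph{existence} of some $\bs\epsilon$, not a canonical one, so there is no well-defined map from steps in $A$ to steps in $E(\bs\alpha^i)$, let alone one that lands in a saturated chain ending at your specific $\bs\nu$. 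You correctly flag this as ``the main obstacle'', but you do not overcome it.

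There is a much shorter route that avoids this obstacle entirely. Since $d$ does not depend on the chain, choose a saturated chain in $E(\bs\alpha)$ passing through $\bs e':=\bs e(E(\bs\alpha^i))$; the tail from $\bs e'$ to a large $\bs N$ is saturated in $E(\bs\alpha^i)$ as well, so
\[
d\big(E(\bs\alpha)\setminus E(\bs\alpha^i)\big)=d_{E(\bs\alpha)}\big(\bs e,\bs e'\big),\qquad \bs e:=\bs e(E(\bs\alpha)).
\]
Thus everything reduces to showing that $\bs e$ and $\bs e'$ are \emph{consecutive} in $E(\bs\alpha)$ when $A\neq\emptyset$. Suppose $\bs e<\bs\delta<\bs e'$ in $E(\bs\alpha)$; then $\bs\delta\notin E(\bs\alpha^i)$, so $\delta_i=\alpha_i=e_i$. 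Pick $j\neq i$ with $\delta_j>e_j$ (such $j$ exists since $\bs\delta>\bs e$). Applying (G3) to $\bs e,\bs\delta$ in coordinate $i$ yields $\bs\epsilon\in E(\bs\alpha^i)$ with $\epsilon_j=e_j$; hence $e'_j\le\epsilon_j=e_j\le e'_j$, so $e'_j=e_j$. But then $\delta_j\le e'_j=e_j<\delta_j$, a contradiction. So $d_{E(\bs\alpha)}(\bs e,\bs e')=1$, which gives both the bound and the equality criterion at once.
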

Given a good ideal $E$, there  always exists a minimal element $\bs{c(E)}$ such that if $\bs{\alpha}\geq \bs{c(E)}$, then $\bs{\alpha}\in E$. The element $\bs{c(E)}$  is called the \emph{conductor} of the ideal $E$ and the ideal $C(E):=E(\bs{c(E)})$ is called the \emph{conductor ideal} of $E$.
If $\bs{c(E)}=(c_1,\ldots, c_d)$, we denote by $c_E:=c_1+\ldots+c_d$.

\begin{defi}
Given a good ideal $E$ of a good semigroup $S\subseteq \N^d$, we define the \emph{genus} of $E$ as the number $g(E)=d(\N^d\setminus E)$ and the length of $E$ as the number $l(E)=d(E\setminus C(E))$. In particular $g(S)=d(\N^d\setminus S)$ and $l(S)=d(S\setminus C(S))$.
\end{defi}

\begin{oss}
\label{genalt}
By Proposition \ref{propBDF}.1), $d(\N^d\setminus C(E))=d(\N^d\setminus E)+d(E\setminus C(E))$. Thus, we can write: $g(E)=c_E-l(E)$.
\end{oss}

Now we want to introduce a useful formula for the computation of the genus.
As in \cite{Carvalho:Semiring} and in \cite{emb-NG}, we will work on the equivalent structure of semiring $\Gamma_S$ in order to simplify the notation.\\ 
We consider the one-point compactification of the topological space $\N$, hence we set $\overline{\N}=\N\cup\{\infty\}$ and we extend the natural order and the sum over $\N$ to $\overline{\N}$, setting respectively, $a<\infty$ for all $a\in \N$ and  $x+\infty=\infty+x=\infty$.\\

Let $S\subseteq \N^d$ be a good semigroup with $\bs{c(S)}=(c_1,\ldots,c_d)$. Denoting by $I=\{1,\ldots d\}$ and taking $U\subseteq I$ we introduce the sets $S^{\infty(U)}\subseteq\overline{\N}^d$ as the sets containing the limits of elements of $S$ in $\N^d$ with fixed projection $\alpha_i$, for all $i\in U$. 
More explicitly we say that the vector $(\alpha_{1},\ldots,\alpha_{d})\in S^{\infty(U)}$ if and only if $\alpha_{j}=\infty$ for all $j\notin U$  and there exists $\bs{\beta} \in S$ such that $\beta_j=\alpha_j$ if $j\in U$ and $\beta_j= c_j$ if $j\notin U$.\\

We set
$$S^{\infty}:=\bigcup_{U\subsetneq I} S^{\infty(U)},$$
and we notice that $S^{\infty(\emptyset)}=\{\bs{\infty}=(\infty,\ldots,\infty)\}$ and $S^{\infty(I)}=S$.
Hence we denote by $\Gamma_S:=S\cup S^{\infty}$ the closure of $S$ in $\N^d$. Given $\bs{\alpha}=(\alpha_1,\ldots,\alpha_d),$ $\bs{\beta}=(\beta_1,\ldots,\beta_d)\in \Gamma_S$, we introduce the operations $$\bs{\alpha}\oplus\bs{\beta}:=\min\{\bs{\alpha},\bs{\beta}\}=(\min\{\alpha_1,\beta_1\},\ldots,\min\{\alpha_d,\beta_d\}),$$
$$\bs{\alpha}\odot\bs{\beta}:=\bs{\alpha}+\bs{\beta}.$$
It is easy to prove that $(\Gamma_S,\oplus,\odot)$ is a semiring.

Given a subset $E$ of a good semigroup $S\subseteq \N^d$, we denote by \[E_j=\{e_j\hspace{0.1cm}|\hspace{0.1cm}(e_1,\ldots,e_d)\in E\},\] the $j$th projection of $E$. We introduce the following numbers:
\begin{eqnarray*}
l_S(E_j)&=&|\{\alpha\in E_j\hspace{0.1cm}|\hspace{0.1cm}\alpha_j< c_j\}|,\\ 
g_S(E_j)&=&|\{\alpha\in \N\setminus E_j\hspace{0.1cm}|\hspace{0.1cm}\alpha_j< c_j\}|.
\end{eqnarray*}
\begin{oss} It is easy to notice that, given $U \subseteq I$, the subset of $S$ defined as 
$$ E^{U}=\{ \bs{\alpha}=(\alpha_1,\ldots,\alpha_d) \in S \hspace{0.1cm}|\hspace{0.1cm} \alpha_j \geq c_j \hspace{0.1cm} \forall j\notin U\},$$
is actually a good relative ideal of $S$.
Furthermore, considering $j \in U$, we have that $j$th projection $S_j^{\infty(U)}$ of $S^{\infty(U)}$ is equal to $E^{(U)}_j$.
\end{oss}
In pursuit of  an idea reported in \cite{symm:delgado}, with the following recursive formulas we can reduce to compute the length and the genus of a good semigroup by only considering  small elements of numerical ones.
\begin{prop}
\label{length}
Given a good semigroup $S\subseteq \N^d$, if we denote by $U_i=\{i,\ldots,d\}\subseteq I$, we have: 
\begin{eqnarray*}
l(S)&=&\sum_{i=1}^d l_S(S^{\infty(U_i)}_i),\\
g(S)&=&\sum_{i=1}^d g_S(S^{\infty(U_i)}_i).
\end{eqnarray*}
\end{prop}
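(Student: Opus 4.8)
The plan is to express the length $l(S)=d(S\setminus C(S))$ as a double telescoping sum for the additive function $d$: one term for each coordinate, and then, inside each coordinate, one term for each unit. I would work with the good relative ideals $E^{U}$ from the Remark introducing the ideals $E^{U}$. Put $U_{d+1}:=\emptyset$ (recall $U_i=\{i,\ldots,d\}$). Since no index lies outside $U_1=I$ we have $E^{U_1}=S$; since $E^{\emptyset}=\{\bs\alpha\in S:\bs\alpha\geq\bs{c(S)}\}=C(S)$ we have $E^{U_{d+1}}=C(S)$; and, writing $\bs{e}^i$ for the $i$-th basis vector as in Proposition \ref{propBDF}.3), one checks immediately that $E^{U_{i+1}}=E^{U_i}(c_i\bs{e}^i)=\{\bs\alpha\in E^{U_i}:\alpha_i\geq c_i\}$. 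Thus $S=E^{U_1}\supseteq E^{U_2}\supseteq\cdots\supseteq E^{U_{d+1}}=C(S)$ is a descending chain of good relative ideals of $S$, and Proposition \ref{propBDF}.1) yields $l(S)=\sum_{i=1}^{d}d\!\left(E^{U_i}\setminus E^{U_i}(c_i\bs{e}^i)\right)$.

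To evaluate the $i$-th summand I would refine it once more. For $k=0,1,\ldots,c_i$ the truncations $E^{U_i}(k\bs{e}^i)$ form a descending chain of good relative ideals of $S$ --- that a truncation $E(\bs\alpha)$ of a good relative ideal $E$ is again a good relative ideal is routine from Definition \ref{E1} and the definition of relative ideal --- with $E^{U_i}(0\cdot\bs{e}^i)=E^{U_i}$. Applying the additivity of $d$ again and then Proposition \ref{propBDF}.3) with $\bs\alpha=k\bs{e}^i$ (so $\bs\alpha^i=(k+1)\bs{e}^i$), each elementary distance $d\!\left(E^{U_i}(k\bs{e}^i)\setminus E^{U_i}((k+1)\bs{e}^i)\right)$ equals $0$ or $1$, and it equals $1$ precisely when $\{\bs\beta\in E^{U_i}:\beta_i=k\}\neq\emptyset$, i.e. precisely when $k$ lies in the $i$-th projection $E^{U_i}_i$, which by the preceding Remark equals $S^{\infty(U_i)}_i$ (as $i\in U_i$). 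Summing over $0\leq k<c_i$ gives $d\!\left(E^{U_i}\setminus E^{U_i}(c_i\bs{e}^i)\right)=|\{k\in\N:k<c_i,\ k\in S^{\infty(U_i)}_i\}|=l_S(S^{\infty(U_i)}_i)$, which is the length formula. I expect this step --- pinning down exactly the right nested chains of truncations and correctly reading the equality clause of Proposition \ref{propBDF}.3) (``some element of the ideal has $i$-th coordinate equal to $k$'') as the condition ``$k\in S^{\infty(U_i)}_i$'' --- to be the only delicate point; everything else is formal manipulation of $d$.

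The genus formula then follows with no further geometry. By Remark \ref{genalt}, $g(S)=c_S-l(S)=\sum_{i=1}^{d}c_i-\sum_{i=1}^{d}l_S(S^{\infty(U_i)}_i)$. For each $i$ the $c_i$ integers $k$ with $0\leq k<c_i$ split into those lying in $S^{\infty(U_i)}_i$ and those not, so $c_i=l_S(S^{\infty(U_i)}_i)+g_S(S^{\infty(U_i)}_i)$; substituting term by term gives $g(S)=\sum_{i=1}^{d}g_S(S^{\infty(U_i)}_i)$.
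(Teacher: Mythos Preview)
Your proof is correct and is essentially the same argument as the paper's: the chain of good relative ideals $E^{U_i}(k\bs{e}^i)$ you use coincides with the paper's chain $S((c_1,\ldots,c_{i-1},k,0,\ldots,0))$, and both proofs evaluate each unit step via Proposition~\ref{propBDF}.3). The only (minor) difference is that the paper proves the genus formula by repeating the argument ``analogously'', whereas you deduce it algebraically from the length formula via Remark~\ref{genalt} and the trivial identity $c_i=l_S(S^{\infty(U_i)}_i)+g_S(S^{\infty(U_i)}_i)$.
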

\begin{proof}
Let us consider the chain from $\bs{0}$ to $\bs{c(S)}=(c_1,\ldots,c_d)$, where the elements have the form $(i_1,\ldots,i_j,\ldots,i_d)$ with $0\leq i_j\leq c_j$, ordered with respect the lexicographical order on $\N^d$.\\
We consider the elements of the chain having the form $(i_1,0,\ldots,0)$ with $i_1\in \{0,\ldots,c_1-1\}$, as a consequence of  Proposition \ref{propBDF}.3), we have:
\begin{eqnarray*}
d(S((i_1,0,\ldots,0))\setminus S((i_1+1,0,\ldots,0)))=1 \Leftrightarrow\\
\{\bs{\beta}\in S\hspace{0.1cm}|\hspace{0.1cm} \beta_1=i_1\hspace{0.1cm},\hspace{0.1cm} \beta_k\geq 0,k\neq 1\hspace{0.1cm}\}\neq \emptyset \Leftrightarrow\\
\text{there exists an element of the form } (i_1,\alpha_2,\ldots,\alpha_d)\in S.  
\end{eqnarray*}
Thus, by Proposition \ref{propBDF}.1), we have:
$$d(S((0,0,\ldots,0))\setminus S((c_1,0,\ldots,0)))=l(S_1)=l_S(S^{\infty(U_1)}_1)$$
Now, let us consider the elements of the chain of the form $(c_1,\ldots,c_{j-1},i_j,\ldots,0)$ with $i_j\in \{1,\ldots,c_j\}$. We observe that $(c_1,\ldots,c_{j-1},i_j,\ldots,0)\in S$ if and only if $(\infty,\ldots,\infty,i_j,\ldots,0)\in \Gamma_S$; so we have:
\begin{eqnarray*}
&&d(S((c_1,\ldots,c_{j-1},i_j,\ldots,0))\setminus S((c_1,\ldots,c_{j-1},i_j+1,\ldots,0)))=1 \Leftrightarrow\\
&&\{\bs{\beta}\in \Gamma_S\hspace{0.1cm}|\hspace{0.1cm} \beta_j=i_j\hspace{0.1cm},\hspace{0.1cm} \beta_k\geq 0,k>j \text{ and } \beta_k=\infty,k<j \hspace{0.1cm}\}\neq \emptyset \Leftrightarrow\\
&&\text{there exists an element of the form } (\infty,\infty,\ldots,i_j,\alpha_{j+1},\ldots,\alpha_d)\in \Gamma_S.  
\end{eqnarray*}
We observe $d(S((c_1,\ldots,c_{j-1},0,\ldots,0))\setminus S((c_1,\ldots,c_{j-1},1,\ldots,0)))=0$, otherwise there would be an element of the form $(c_1,\ldots,c_{j-1},0,\alpha_{j+1},\ldots,\alpha_d)\in S$, but this contradicts the locality of $S$.
Hence, by Proposition \ref{propBDF}.1), we have:
$$d(S((c_1,\ldots,c_{j-1},1,\ldots,0))\setminus S((c_1,\ldots,c_{j-1},c_j,\ldots,0)))=l_S(S^{\infty(U_j)}_j),$$
Using again  Proposition \ref{propBDF}.1), by the definition of length we obtain:
$$l(S)=\sum_{i=1}^d l_S(S^{\infty(U_i)}_i).$$
The proof of the second formula is analogous to the first one.
\end{proof}
\begin{ex}
\label{countlenght}
Let us consider the good semigroup $S\subseteq \N^4$ having small elements:
\begin{eqnarray*}
\operatorname{Small}(S)&=&\{ (2, 3, 2, 9 ), (2, 4, 4, 9), (2, 4, 6, 9 ), (2, 4, 8, 9 ), ( 2, 4, 9, 9 ), \\
&&  ( 2, 4, 10, 9), (4, 3, 2, 10), (4, 6, 4, 18 ), ( 4, 7, 6, 18), ( 4, 7, 8, 18 ),  \\ &&(4, 7, 9, 18 ), (4, 7, 10, 18),
 (4, 8, 6, 18 ), (4, 8, 8, 18), (4, 8, 9, 18), \\ &&(4, 8, 10, 18 ), ( 6, 3, 2, 10 ), ( 6, 6, 4, 18 ),
 ( 6, 7, 6, 18 ), ( 6, 7, 8, 18 ),\\ && ( 6, 7, 9, 18 ), ( 6, 7, 10, 18 ), ( 6, 9, 6, 18 ), ( 6, 10, 8, 18 ), ( 6, 10, 9, 18 ),  \\&& (6, 10, 10, 18 ), ( 7, 3, 2, 10 ), ( 7, 6, 4, 18 ), ( 7, 7, 6, 18 ), ( 7, 7, 8, 18 ),  \\
&&( 7, 7, 9, 18 ), ( 7, 9, 6, 18 ), ( 7, 10, 8, 18 ), ( 7, 10, 9, 18 ), ( 8, 3, 2, 10 ), \\
&&( 8, 6, 4, 18 ), ( 8, 7, 6, 18 ), ( 8, 7, 8, 18 ), ( 8, 7, 10, 18 ), ( 8, 9, 6, 18 ), \\ && ( 8, 10, 8, 18 ), ( 8, 10, 10, 18 )\}
\end{eqnarray*}
The conductor is $\bs{c(S)}=(8,10,10,18)$. We compute the sets $S^{\infty(U_i)}_i$.
$$S^{\infty(U_1)}=S$$ $$S_1^{\infty(U_1)}=\{0,2,4,6,7,8=c_1,\rightarrow\}=S_1.$$
$l_S(S_1^{\infty(U_1)})=5$, $g_S(S_1^{\infty(U_1)})=3$.
\begin{eqnarray*}
S^{\infty(U_2)}&=&\{( \infty, 3, 2, 10 ), ( \infty, 6, 4, 18 ),
( \infty, 7, 6, 18 ), ( \infty, 7, 8, 18 ),\\ 
&& ( \infty, 7, 10, 18 ), ( \infty, 9, 6, 18 ), ( \infty, 10, 8, 18 ), ( \infty, 10, 10, 18 ), \ldots\}
\end{eqnarray*}
$S_2^{\infty(U_2)}=\{3,6,7,9,10=c_2,\rightarrow\}$, $l_S(S_2^{\infty(U_2)})=4$, $g_S(S_2^{\infty(U_2)})=6$.
\begin{eqnarray*}
S^{\infty(U_3)}&=&\{( \infty, \infty, 8, 18 ), ( \infty, \infty, 10, 18 ), \ldots\}
\end{eqnarray*}
$S_3^{\infty(U_3)}=\{8,10=c_3,\rightarrow\}$, $l_S(S_3^{\infty(U_3)})=1$, $g_S(S_3^{\infty(U_3)})=9$.
\begin{eqnarray*}
S^{\infty(U_4)}&=&\{( \infty, \infty, \infty, 18 ),\ldots\}
\end{eqnarray*}
$S_4^{\infty(U_4)}=\{18=c_4,\rightarrow\}$, $l_S(S_4^{\infty(U_4)})=0$, $g_S(S_4^{\infty(U_4)})=18$.\\
\begin{eqnarray*}
l(S)&=&l_S(S_1^{\infty(U_1)})+l_S(S_2^{\infty(U_2)})+l_S(S_3^{\infty(U_3)})+l_S(S_4^{\infty(U_4)})\\ &=&5+4+1+0=10\\
g(S)&=&g_S(S_1^{\infty(U_1)})+g_S(S_2^{\infty(U_2)})+g_S(S_3^{\infty(U_3)})+g_S(S_4^{\infty(U_4)})\\&=&3+6+9+18=36
\end{eqnarray*}
\end{ex}
\vspace{0.2cm}
\textbf{Notations:}\\
In case of good semigroups with two branches, we can write in a different way the formula for the computation of the genus.
Now we introduce some notations regarding the good semigroups of $\N^2$ that will be useful also in the next section.
Given a good semigroup $S\subseteq \N^d$ and an element $\bs{\alpha}\in \N^d$, following the notation in \cite{anal:unr}, we set:
\begin{eqnarray*}
	\Delta_i(\bs{\alpha})&:=&\{\bs{\beta}\in \Z^{d}|\alpha_i=\beta_i \text{ and } \alpha_j<\beta_j \text{ for } j\neq i\}\\
	\Delta(\bs{\alpha})&:=&\bigcup_{i=1}^d\Delta_i(\bs{\alpha})\\
	\Delta_i^S(\bs{\alpha})&:=&S\cap \Delta_i(\bs{\alpha})\\
	\Delta^S(\bs{\alpha})&:=&S\cap\Delta(\bs{\alpha}).
\end{eqnarray*}

Furthermore we define:
\begin{eqnarray*} {}_i\Delta(\bs{\alpha})&:=&\{\bs{\beta}\in \Z^{d}|\alpha_i=\beta_i \text{ and } \beta_j<\alpha_j \text{ for } j\neq i\}\\
	_i\Delta^S(\bs{\alpha})&:=&S\cap _i\Delta(\bs{\alpha}).\end{eqnarray*}

Extending some of the previous definitions to infinite elements of $\overline{\N}^2$, we set
\begin{eqnarray*}
	_1\Delta((\alpha_1,\infty))&:=&\{\bs{\beta}\in \Z^{2}|\beta_1=\alpha_1 \}\\
		_2\Delta((\alpha_1,\infty))&:=&\emptyset\\
	_1\Delta((\infty,\alpha_2))&:=&\emptyset\\
		_2\Delta((\infty,\alpha_2))&:=&\{\bs{\beta}\in \Z^{2}|\beta_2=\alpha_2 \}
		\\
	_i\Delta^S(\bs{\alpha})&:=&S\cap {}_i\Delta(\bs{\alpha}).
\end{eqnarray*}

From this point onwards, with a little abuse of notation, we denote again by $S$ the semiring associated to the good semigroup $S$.

We will say that an element $\bs{\alpha}\in S \setminus{\bs{0}}$ is \emph{irreducible} if, from $\bs{\alpha}=\bs{\beta}+\bs{\gamma}$ with $\bs{\beta},\bs{\gamma}\in S$, it follows $\bs{\alpha}=\bs{\beta}$ or $\bs{\alpha}=\bs{\gamma}$. An element that is not irreducible will be called \emph{reducible}.
We denote by $I(S)$ the set of irreducible elements of $S$.
An element $\bs{\alpha}\in S$ will be called \textit{maximal} in $S$ if $\bs{\alpha}\in S\setminus S^{\infty}$ and $\Delta_S(\bs{\alpha})=\emptyset$ (finite maximal), or if $\bs{\alpha}\in S^{\infty}$ (infinite maximal).

We denote by $A_f(S)$ the set of \emph{finite maximals} in $S$ and by $A(S):=A_f(S)\cup S^{\infty}$ the set of all maximals in $S$.
Furthermore we call $I_A(S)$ the set of all \emph{irreducible maximals} in $S$.
It is easy to see that the set of irreducible maximals is finite and in \cite{emb-NG} it is proved that $S$ is generated by these elements as a semiring.

\begin{cor}
\label{delgform}
If $S\subseteq \N^2$ is a good semigroup, then
$g(S)=g(S_1)+g(S_2)+|A_f(S)|.$
\end{cor}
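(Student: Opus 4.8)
The plan is to specialize the recursive formula from Proposition~\ref{length} to the case $d=2$ and then identify the correction term. For $S\subseteq\N^2$ with $\bs{c(S)}=(c_1,c_2)$, Proposition~\ref{length} gives $g(S)=g_S(S^{\infty(U_1)}_1)+g_S(S^{\infty(U_2)}_2)$, where $U_1=\{1,2\}$ and $U_2=\{2\}$. Since $S^{\infty(U_1)}=S$, the first projection $S^{\infty(U_1)}_1$ is exactly $S_1$, the image of $S$ under the first coordinate projection, so $g_S(S^{\infty(U_1)}_1)=|\{\alpha\in\N\setminus S_1\mid\alpha<c_1\}|=g(S_1)$ (the genus of the numerical semigroup $S_1$, noting $S_1$ is a numerical semigroup with conductor $c_1$, so there are no gaps beyond $c_1$). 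The content of the corollary is therefore the identity
$$g_S(S^{\infty(U_2)}_2)=g(S_2)+|A_f(S)|.$$

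First I would unwind the definition of $S^{\infty(U_2)}_2$. By the Remark preceding Proposition~\ref{length}, $S^{\infty(U_2)}_2=E^{U_2}_2$ where $E^{U_2}=\{\bs{\alpha}\in S\mid\alpha_1\geq c_1\}$; concretely, $b\in S^{\infty(U_2)}_2$ iff there exists $a\geq c_1$ with $(a,b)\in S$, equivalently (since $(a,b)\in S$ and $a\geq c_1$ forces, via property (G2)-type arguments and the structure of $S$) iff $(\infty,b)\in\Gamma_S$. So $S^{\infty(U_2)}_2$ is the second projection of the ``infinite line'' $S^{\infty(\{2\})}$. On the other hand $S_2$ is the full second projection of $S$, so $S_2\supseteq S^{\infty(U_2)}_2$, and $g(S_2)=|\{b\in\N\setminus S_2\mid b<c_2\}|\leq|\{b\in\N\setminus S^{\infty(U_2)}_2\mid b<c_2\}|=g_S(S^{\infty(U_2)}_2)$. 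Thus the key step is to show that the difference
$$g_S(S^{\infty(U_2)}_2)-g(S_2)=|\{b<c_2\mid b\in S_2\setminus S^{\infty(U_2)}_2\}|$$
equals $|A_f(S)|$.

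The heart of the argument is a bijection between $\{b<c_2\mid b\in S_2,\ b\notin S^{\infty(U_2)}_2\}$ and the set $A_f(S)$ of finite maximals. Given such a $b$, there is some $(a,b)\in S$ with $a<c_1$ (since $b\in S_2$) but no element of $S$ of the form $(a',b)$ with $a'\geq c_1$ (since $b\notin S^{\infty(U_2)}_2$); I would take $a$ to be the largest first coordinate appearing with second coordinate $b$ in $S$ (finite by the previous sentence and since once you fix $\beta_2=b<c_2$ the fiber is bounded), and show $(a,b)$ is a finite maximal, i.e.\ $\Delta^S((a,b))=\emptyset$: $\Delta_2((a,b))=\emptyset$ by maximality of $a$, and $\Delta_1((a,b))=\emptyset$ because an element $(a,b')$ with $b'>b$ would, combined with property (G3) applied to $(a,b)$ and a conductor-region element, or directly, force an element with first coordinate $\geq c_1$ and second coordinate $b$ — contradicting $b\notin S^{\infty(U_2)}_2$; one has to be a little careful here and use (G3) correctly. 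Conversely, given a finite maximal $(a,b)\in A_f(S)$ (which is local, so $a,b\geq1$), locality and maximality force $a<c_1$ and $b<c_2$, and $\Delta_1((a,b))=\emptyset$ gives that no $(a,b')$ with $b'>b$ lies in $S$; using $\Delta_2=\emptyset$ together with (G3) one checks no $(a',b)$ with $a'>a$ lies in $S$ either, so in particular $b\notin S^{\infty(U_2)}_2$, while $b\in S_2$. The map $(a,b)\mapsto b$ is then well-defined on $A_f(S)$, and it is the inverse of the previous construction; injectivity of $(a,b)\mapsto b$ on $A_f(S)$ follows because for fixed $b$ there is at most one finite maximal with second coordinate $b$ (by property (G1): if $(a,b),(a',b)\in S$ then $(\min(a,a'),b)=\min((a,b),(a',b))\in S$, and (G3) then produces an element strictly above the one with smaller first coordinate, so it is not maximal). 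Summing: $g_S(S^{\infty(U_2)}_2)=g(S_2)+|A_f(S)|$, and combined with $g_S(S^{\infty(U_1)}_1)=g(S_1)$ this yields $g(S)=g(S_1)+g(S_2)+|A_f(S)|$.

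The main obstacle will be the careful bookkeeping in the bijection — specifically, correctly invoking property (G3) to rule out elements $(a',b)\in S$ with $a'>a$ when $(a,b)$ is a finite maximal, and making sure the ``largest first coordinate'' in the forward direction is genuinely finite and genuinely maximal. Everything else is a direct translation of Proposition~\ref{length} together with the Remark identifying $S^{\infty(U)}_j$ with $E^{U}_j$.
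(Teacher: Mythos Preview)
Your proposal is correct and follows exactly the paper's approach: specialize Proposition~\ref{length} to $d=2$, identify the first summand with $g(S_1)$, and then show that the ``missing'' second coordinates in $S^{\infty(U_2)}_2$ relative to $S_2$ are in bijection with $A_f(S)$. The paper's proof merely asserts that $\{y\mid\max\{x:(x,y)\in S\}<c_1\}$ has the same cardinality as $A_f(S)$, whereas you spell out the bijection; your extra care is justified but not a different idea. One small cleanup: in the forward direction you do not need any ``conductor-region element'' to see $\Delta_1^S((a,b))=\emptyset$ --- applying (G3) directly to $(a,b)$ and $(a,b')$ produces $\bs{\epsilon}\in S$ with $\epsilon_1>a$ and $\epsilon_2=b$, contradicting the maximality of $a$; similarly, in the backward direction $\Delta_2^S((a,b))=\emptyset$ alone already rules out any $(a',b)\in S$ with $a'>a$, so no appeal to (G3) is needed there.
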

\begin{proof}
In this case, by the last proposition, it follows that
$g(S)=g(S_1)+g_S(S^{\infty(2)}_2)$.
We notice that:
$$\N\setminus S^{\infty(2)}_2=(\N\setminus S_2)\cup\{y\hspace{0.1cm}|\hspace{0.1cm} \max\{x \hspace{0.1cm}|\hspace{0.1cm}\exists (x,y)\in S\}<c_1\}$$
and observing that the second set has the same cardinality as $A_f(S)$, we obtain the thesis.
\end{proof}

\section{The tree of  good semigroups of $\N^2$ by genus}
\label{sec2}
In \cite{Bras-Amoros2008}, it is presented a method to compute all numerical semigroups up to a fixed genus building a tree where each new level is obtained removing  minimal generators larger than the Frobenius number from the semigroups of the previous level.

In this section we want to repeat the same process with the good semigroups, building a tree of local good semigroups of $\N^2$, where the $g$th level of the tree collects all local good semigroups with genus $g$.
In order to follow this idea, we will show that, in this case, the analogous of minimal generators are the \emph{tracks} of the good semigroup originally defined in \cite{emb-NG}.\\
Here we will recall the definition:

\begin{defi}
\label{pitrack}
Given $\bs{\alpha},\bs{\beta}\in I_A(S)$ we say that $\bs{\alpha}$ and $\bs{\beta}$ are connected by a \emph{piece of track} if they are not comparable, i.e. $\bs{\alpha} \not \leq \bs{\beta}$ and $\bs{\beta} \not \leq \bs{\alpha}$, and denoting $\bs{\gamma}=\bs{\alpha} \oplus \bs{\beta}$, we have $\Delta^S(\bs{\gamma})\cap (S\setminus I(S))=\emptyset$.
\end{defi}

\begin{defi}
\label{track}
Given $\bs{\alpha_1},\ldots,\bs{\alpha_n}\in I_A(S)$, with $\alpha_{11}<\ldots<\alpha_{n1}$ we say that $\bs{\alpha_1},\ldots,\bs{\alpha_n}$ are connected by a \emph{track} if 
we have:
\begin{itemize}
    \item $_2\Delta^S(\bs{\alpha_1})\cap(S\setminus I(S))=\emptyset$;
    \item $_1\Delta^S(\bs{\alpha_n})\cap(S\setminus I(S))=\emptyset$;
    \item $\bs{\alpha_i}$ and $\bs{\alpha_{i+1}}$ are connected by a \emph{piece of track} for all $i\in\{1,\ldots, n-1\}$.
\end{itemize}
In this case, denoted by $\bs{\gamma_i}=\bs{\alpha_i}\oplus \bs{\alpha_{i+1}}$ for $i\in\{1,\ldots, n-1\}$, we set:
$$T((\bs{\alpha_1},\ldots,\bs{\alpha_n}))=\{\bs{\alpha_1}\} \cup {}_2\Delta^S(\bs{\alpha_1}) \cup \left(\cup_{i=1}^{n-1}\Delta^S(\bs{\gamma_i})\right)\cup {}_1\Delta^S(\bs{\alpha_n}) \cup \{\bs{\alpha_n}\},$$
and we call this set  the \emph{track} connecting $\bs{\alpha_1},\ldots,\bs{\alpha_n}$.
\end{defi}
We will simply say that $T\subseteq S$ is  a \emph{track} in $S$ if there exist $\bs{\alpha_1},\ldots,\bs{\alpha_n}\in I_A(S)$ such that $T$ is the track connecting $\bs{\alpha_1},\ldots,\bs{\alpha_n}$. We call $\bs{\alpha_1}$ and $\bs{\alpha_n}$ respectively the \textit{starting point} and the \textit{ending point} of the track.
We include in the definition of tracks of $S$, also the sets of the form  $T((\bs{\alpha}))={}_1{\Delta}^S(\bs{\alpha})\cup\{\bs{\alpha}\}\cup{}_2{\Delta}^S(\bs{\alpha})$ consisting only of an element $\bs{\alpha}\in I_A(S)$ such that $_i\Delta^S(\bs{\alpha})\cap(S\setminus I(S))=\emptyset$, with $i=1,2$. In this case the starting point and the ending point are the same.
Noting that the previous definition implies that a track $T$ of $S$ never contains elements $\bs{\alpha}$ such that $\bs{\alpha} \geq \bs{c(S)}+\bs{e(S)}.$\\

The following statement holds:
\begin{lem}
\label{semcont}
Given a good semigroup $S$, and  a track $T=T((\bs{\alpha_1},\ldots,\bs{\alpha_n}))$ in $S$, then,
$S'=S\setminus T$ is a good semigroup strictly contained in $S$. 
\end{lem}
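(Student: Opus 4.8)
The plan is to verify directly that $S' = S \setminus T$ satisfies the monoid axiom together with (G1), (G2), (G3), and locality. That $S' \subsetneq S$ is immediate since a track is nonempty (it contains $\bs{\alpha_1} \in I_A(S)$). For (G2) and the conductor: every element of $T$ is bounded above by $\bs{c(S)} + \bs{e(S)} - \bs{1}$ (the remark just before the statement notes $T$ contains no $\bs{\alpha} \geq \bs{c(S)} + \bs{e(S)}$), so the removed set is finite and $\bs{c(S)} + \bs{e(S)} + \N^2 \subseteq S'$; locality of $S'$ is inherited from $S$ since $\bs{0} \notin T$. The substantive points are closure under $+$, under $\oplus = \min$, and property (G3).

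For closure under $+$ and under $\min$, the key observation is that every element of a track is \emph{irreducible} except possibly the pieces $\Delta^S(\bs{\gamma_i})$ — but by the piece-of-track condition $\Delta^S(\bs{\gamma_i}) \cap (S \setminus I(S)) = \emptyset$, so in fact $T \subseteq I(S) \cup S^{\infty}$; more precisely $T \subseteq I_A(S) \cup (\text{irreducible non-maximal elements})$. Actually the cleanest route: show $T \subseteq I(S)$, i.e. a track consists entirely of irreducible elements. Granting this, if $\bs{\beta}, \bs{\gamma} \in S'$ and $\bs{\beta} + \bs{\gamma} \in T$, then $\bs{\beta} + \bs{\gamma}$ is irreducible, forcing $\bs{\beta} + \bs{\gamma} = \bs{\beta}$ or $= \bs{\gamma}$, i.e. $\bs{\gamma} = \bs{0}$ or $\bs{\beta} = \bs{0}$, contradiction; hence $S'$ is a submonoid. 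For $\min$: suppose $\bs{\beta}, \bs{\gamma} \in S'$ but $\bs{\delta} := \bs{\beta} \oplus \bs{\gamma} \in T$. One analyzes where $\bs{\delta}$ sits on the track. If $\bs{\delta}$ lies on the "horizontal" part $_1\Delta^S(\bs{\alpha_n}) \cup \{\bs{\alpha_n}\}$ or a piece $\Delta_1^S(\bs{\gamma_i})$, then $\delta_2$ is the fixed coordinate, so $\beta_2 \geq \delta_2$ and $\gamma_2 \geq \delta_2$ with equality in one of them, and the minimum in the first coordinate is strict from one side; using (G1)/(G3) in $S$ and the fact that the corresponding maximal $\bs{\alpha_i}$ "caps" that row — $\Delta_1^S(\bs{\alpha_i}) $ being contained in $T$ or $\bs{\alpha_i}$ being maximal — one derives that $\bs{\beta}$ or $\bs{\gamma}$ would have to coincide with a track element or lie below one in a way that reconstructs $\bs\delta$, contradiction. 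The symmetric case with the vertical part is analogous. This is the part that requires the most care, and I expect the bookkeeping over the various positions of $\bs\delta$ along the track (endpoints, interior pieces $\Delta_1$ vs $\Delta_2$) to be \textbf{the main obstacle}.

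For (G3): take $\bs{\alpha}, \bs{\beta} \in S'$ with $\alpha_i = \beta_i$, $\bs{\alpha} \neq \bs{\beta}$, WLOG $i = 1$ and $\alpha_2 < \beta_2$. Property (G3) in $S$ gives $\bs{\epsilon} \in S$ with $\epsilon_1 > \alpha_1$ and $\epsilon_2 \geq \beta_2$ (the minimum of $\alpha_2, \beta_2$). If such an $\bs\epsilon$ can always be chosen in $S'$ we are done; the danger is that every admissible $\bs\epsilon \in S$ lies in $T$. But an element of $T$ with $\epsilon_1 > \alpha_1$ large must be near the ending point $\bs{\alpha_n}$ or on a piece $\Delta^S(\bs\gamma_i)$ with large first coordinate; in either situation, because $\Delta_1(\bs\epsilon)$ or $\Delta(\bs\epsilon)$ meets $S$ (again by (G3) applied to $\bs\epsilon$ and a suitable track element, together with the irreducibility structure), one can push $\bs\epsilon$ further up in the first coordinate along $S$ while keeping $\epsilon_2 \geq \beta_2$, eventually reaching $S \setminus T$ since $T$ is finite and bounded. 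Here one also uses that $\bs{\alpha}, \bs{\beta} \notin T$ to start the argument off the track. I would organize (G3) as: (a) reduce to the case $\bs\epsilon$ minimal satisfying the conclusion in $S$; (b) show minimality plus membership in $T$ is contradictory using Definition \ref{track}'s conditions on $_1\Delta^S, {}_2\Delta^S$ and the piece-of-track condition $\Delta^S(\bs\gamma_i) \cap (S \setminus I(S)) = \emptyset$. Assembling (G1), (G2), (G3), locality and the monoid property then gives that $S'$ is a good semigroup strictly contained in $S$, completing the proof.
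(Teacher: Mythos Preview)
The paper gives no proof here; Lemma \ref{semcont} is quoted from \cite{emb-NG}. Your plan---verify the monoid axiom and (G1)--(G3) directly---is the natural route and is the one taken in that reference. Your key observation that every element of a track is irreducible (this is precisely what the conditions in Definitions \ref{pitrack} and \ref{track} impose) is correct and disposes of closure under $+$ in one line. For (G1), the mechanism you describe is right: if $\bs{\delta}=\bs{\beta}\oplus\bs{\gamma}$ lands on, say, the horizontal piece $\Delta_2^S(\bs{\gamma_i})$ (second coordinate $=\alpha_{i+1,2}$), then whichever of $\bs{\beta},\bs{\gamma}$ contributes that second coordinate is forced, by the maximality of $\bs{\alpha_{i+1}}$ in $S$, to lie on the same segment and hence in $T$; the vertical pieces and the endpoint pieces ${}_2\Delta^S(\bs{\alpha_1})$, ${}_1\Delta^S(\bs{\alpha_n})$ are handled symmetrically. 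The (G3) argument works along the same lines.

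What you have written is a sound plan rather than a proof: the full verification requires actually carrying out the case split over the pieces ${}_2\Delta^S(\bs{\alpha_1})$, $\Delta_k^S(\bs{\gamma_i})$, ${}_1\Delta^S(\bs{\alpha_n})$ and the endpoints, and separating the situations where $\bs{\alpha_1}$ or $\bs{\alpha_n}$ is an infinite maximal (in which case the corresponding ${}_k\Delta^S$ is empty by the extended definitions and the shape of the track changes). There is no missing idea, only missing bookkeeping; since the paper itself defers to \cite{emb-NG}, a reader wanting the details would in any case have to consult that source.
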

\begin{proof}
See \cite{emb-NG}.
\end{proof}
 
Given $A,B\subseteq \N^d$ we set: \[\min(A,B)=\{\min(\bs{\alpha},\bs{\beta})\hspace{0.1cm}|\hspace{0.1cm} \bs{\alpha}\in A, \bs{\beta}\in B\}. \]
In the next two theorems we will establish a relationship between the tracks and the genus of a good semigroup.

\begin{teo}
\label{teo1}
Each local good semigroup $S' \neq \N^2(1,1)$ with genus $g(S')$ can be obtained removing a track from a good semigroup with genus $g(S')-1$.
\end{teo}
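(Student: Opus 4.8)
The plan is to imitate Bras-Amor\'os's construction from the numerical case, where a numerical semigroup $S'\neq\N$ of genus $g$ is recovered as $(S'\cup\{\mathrm{F}(S')\})\setminus\{\mathrm{F}(S')\}$: adjoining the Frobenius number produces a numerical semigroup of genus $g-1$ in which $\mathrm{F}(S')$ becomes a minimal generator lying above the new Frobenius number. Here the single point $\mathrm{F}(S')$ has to be replaced by a \emph{track} sitting just above the conductor, so the real content of the statement is: \emph{every local good semigroup $S'\neq\N^2(1,1)$ can be written as $S\setminus T$ for a good semigroup $S$ and a track $T$ of $S$}. Granting this, Proposition~\ref{propBDF}.1) applied to $\N^2\supseteq S\supseteq S'$ gives $g(S')=d(\N^2\setminus S)+d(S\setminus S')=g(S)+d(S\setminus S')$; since removing a track raises the $d$-distance by exactly one, that is $d(S\setminus S')=1$ (Theorem~\ref{genuson}, or directly via Proposition~\ref{propBDF}.3)), we conclude $g(S)=g(S')-1$. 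So everything reduces to producing the pair $(S,T)$.

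To build $S$, I would set $\bs c=(c_1,c_2)=\bs{c(S')}$, $\bs f=\bs{f(S')}=\bs c-\bs 1$, and examine $S'$ near $\bs c$. Minimality of the conductor forces a gap of $S'$ on the line $x=c_1-1$ at height $\geq c_2$ and a gap of $S'$ on the line $y=c_2-1$ at abscissa $\geq c_1$; moreover, using (G1), if $\bs f\notin S'$ then at least one of these two lines contains no element of $S'$ above $\bs f$. According to the resulting configuration --- one coordinate of the conductor can be lowered by one, or both can, or only an ``interior'' layer is removable --- I would let $T$ be the corresponding \emph{track of gaps} of $S'$: a single horizontal or vertical ray issuing from an infinite maximal, or an ``L''-shaped / staircase track joining an infinite maximal to a finite one, taking as corners $\bs{\alpha_1},\dots,\bs{\alpha_n}$ the maximal gaps of $S'$ that lie on those lines. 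The adjoined elements all have positive coordinates (one must be careful here when $c_1=1$ or $c_2=1$, using only the other line), so $S:=S'\cup T$ is again local, with conductor $\leq\bs c$ and $T=S\setminus S'$.

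It then remains to verify three things. (i) $S$ is a good semigroup: (G2) is immediate from $\bs c+\N^2\subseteq S'\subseteq S$; for (G1) one only has to check minima with an argument in $T$, and the staircase shape of $T$ makes such a minimum fall back into $T\cup S'$; (G3) is the delicate point --- for a pair $\bs\alpha,\bs\beta\in S$ sharing a coordinate with at least one of them new, one exhibits a valid $\bs\epsilon\in S$ inside $\bs c+\N^2$ when the common coordinate is large, and uses the explicit description of $T$ and the maximality of the $\bs{\alpha_i}$ when the common coordinate equals $c_1-1$ or $c_2-1$. (ii) $T$ is a track of $S$ in the sense of Definitions~\ref{pitrack} and~\ref{track}: the $\bs{\alpha_i}$ are irreducible in $S$ (a nontrivial decomposition would require summands too small to belong to $S'$ and reach $\bs{\alpha_i}$) and maximal in $S$ (they sit on the frontier of $S$, with each ${}_i\Delta^S(\bs{\alpha_i})$ either empty or contained in $T$ and made of irreducibles), and consecutive corners are joined by a piece of track because $\Delta^S(\bs{\alpha_i}\oplus\bs{\alpha_{i+1}})\subseteq T$ contains no reducible element. (iii) $S'\neq\N^2(1,1)$ and, by the opening paragraph, $g(S)=g(S')-1$.

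The hard part will be pinning down $T$ so that all three requirements hold at once. There is real tension: making $T$ larger in order to lower the genus faster --- for example reattaching the whole cross $\{x=c_1-1\}\cup\{y=c_2-1\}$ through $\bs f$ --- typically both fails to be a single track and lowers the genus by two rather than one. The degenerate configurations around the conductor, notably when $\bs f\in S'$ or when $S'$ has no element of abscissa $c_1-1$ (respectively of ordinate $c_2-1$), are where the case analysis concentrates; once the correct $T$ is isolated, the checks of (G1)--(G3) for $S$ and of Definitions~\ref{pitrack}--\ref{track} for $T$ are routine.
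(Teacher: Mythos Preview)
Your plan is exactly the paper's: case-split on whether $\bs{f(S')}\in S'$, adjoin to $S'$ a set $T$ of gaps lying on the lines through $\bs f$, and verify that $S:=S'\cup T$ is good with $T$ a track of $S$. Two small corrections to your sketch. First, the tracks the paper actually uses always have one or two \emph{infinite} corners and nothing more: $T((\infty,f_2))$ when $X=\{x:(x,f_2)\in S'\}=\emptyset$, $T((\tilde x,\infty),(\infty,f_2))$ with $\tilde x=\max X$ when $X\neq\emptyset$ (plus the symmetric $T_Y$), and $T((f_1,\infty),(\infty,f_2))=\Delta(\bs f)$ when $\bs f\in S'$; there is no multi-step ``staircase'' and no finite maximal ever appears as an endpoint, so your description of $T$ is slightly off. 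Second, the paper does not appeal to Theorem~\ref{genuson} for the genus drop (that theorem comes afterwards), but computes it directly from Corollary~\ref{delgform} by tracking how $|A_f|$, $g(S_1)$ and $g(S_2)$ change when $T$ is removed.
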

\begin{proof}
Let us consider a semigroup $S'$ with genus $g(S')$; we have $\bs{c(S')}=(c_1,c_2)$ and $\bs{f(S')}=(c_1-1,c_2-1)=(f_1,f_2)$. We will distinguish two cases.\\
\textbf{Case 1:} $\bs{f(S')}\notin S'$. In this case we introduce the following sets:
\begin{align*}
    & &X=\{x\hspace{0.1cm}|\hspace{0.1cm}(x,f_2)\in S'\}\hspace{1cm}\textrm{ if }f_2 \neq 0, \\ 
    & &Y=\{y\hspace{0.1cm}|\hspace{0.1cm}(f_1,y)\in S'\}\hspace{1cm}\textrm{ if }f_1 \neq 0. 
\end{align*}
Notice that $S' \neq \N^2(1,1)$ implies that at least one among $X$ and $Y$ can be always considered.
If $X$ and $Y$ are defined and not empty we consider respectively:
$\tilde{x}=\max\{x\hspace{0.1cm}|\hspace{0.1cm} (x,f_2)\in S'\}$ and $\tilde{y}=\max\{y\hspace{0.1cm}|\hspace{0.1cm} (f_1,y)\in S'\}$.
If $X=\emptyset$, we denote by $$T_X=\Delta_2(\bs{f(S')})\cup [(\N^2\setminus S') \cap  \min{(S',\Delta_2(\bs{f(S')})})],$$ otherwise we denote by $$T_X=\Delta_2(\bs{f(S')})\cup\Delta_1((\tilde{x},f_2))\cup [(\N^2\setminus S') \cap  \min{(S',\Delta_2(\bs{f(S')})})].$$
If $Y=\emptyset$, we denote by $$T_Y=\Delta_1(\bs{f(S')})\cup [(\N^2\setminus S') \cap  \min{(S',\Delta_1(\bs{f(S')})})],$$ otherwise we denote by $$T_Y=\Delta_1(\bs{f(S')})\cup\Delta_2(f_1,\tilde{y})\cup  [(\N^2\setminus S') \cap  \min{(S',\Delta_1(\bs{f(S')})})].$$
We consider the following sets: 

\begin{align*}
    S^1=S'\cup T_Y,\\
    S^2=S'\cup T_X.
\end{align*}
In order to prove the thesis we can reduce to consider only the good semigroup $S^2$, since considering $S^1$ the proof is analogous.\\
According to definition of tracks, it is easy to observe that $T_X$ is a track of $S^2$ having the form $T_X=T((\tilde{x},\infty),(\infty,f_2))$ if $X\neq \emptyset$ and $T_X=T((\infty,f_2))$ otherwise.\\
If $X=\emptyset$, by removing the track $T_X$, we have that $A_f(S^2)=A_f(S')$, $g(S^2_2)=g(S'_2)-1$ and $g(S^2_1)=g(S'_1)$.
As a consequence of Corollary \ref{delgform} we have that $S'$ is obtained removing a track from the good semigroup $S^2$ having genus $g(S')-1$.\\
If $X\neq \emptyset$, by removing the track $T_X$, we have that $|A_f(S^2)|=|A_f(S')|-1$, $g(S^2_2)=g(S'_2)$ and $g(S^2_1)=g(S'_1)$.
Hence, using again  Corollary \ref{delgform} we have that $S'$ is obtained removing a track from the good semigroup $S^2$ having genus $g(S')-1$.\\

	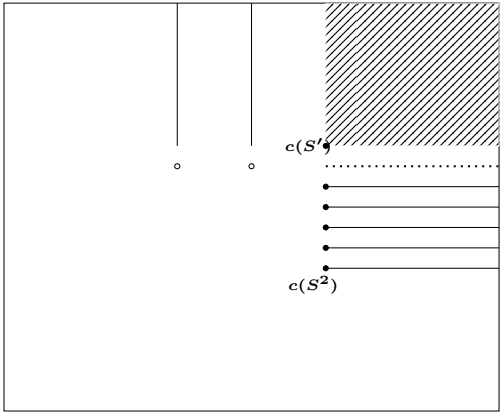
\begin{figure}[H]
	\pgfplotsset{ticks=none}
\tikzset{mark size=1}
\begin{tikzpicture}[scale=0.95]
	\begin{axis}[ xmin=0, ymin=0, xmax=20, ymax=20, ytick={0,7,11,13,20}, xtick={0,7,11,13,20}]
	\addplot[mark=none, black] coordinates {(13,7) (20,7)};
	\addplot[mark=none, black] coordinates {(13,8) (20,8)};
	\addplot[mark=none, black] coordinates {(13,9) (20,9)};
	\addplot[mark=none, black] coordinates {(13,10) (20,10)};
	\addplot[mark=none, black] coordinates {(13,11) (20,11)};
	\addplot[mark=none, black] coordinates {(10,13) (10,20)};
	\addplot[mark=none, black] coordinates {(7,13) (7,20)};
	\addplot[thick, style=dotted]coordinates{(13,12)(20,12)};
	\addplot [pattern = north east lines, draw=white]coordinates{(13,13)(20,13)(20,20)(13,20)(13,13)};
	
	\addplot[only marks, mark=o] coordinates{(7,12) (10,12)}; 
	\addplot[only marks] coordinates{(13,7) (13,8) (13,9) (13,10) (13,11) (13,13)}; 
	\node[font=\tiny] at (12.3,13) {$\bs{c(S')}$};
	\node[font=\tiny] at (12.5,6.2) {$\bs{c(S^2)}$};

\end{axis}
\end{tikzpicture}
\caption{Representation of $T_X$ in case $X=\emptyset$. The elements of $T_X$ are represented with a dashed line and white dots.}
	\end{figure}
	
		\begin{figure}[H]
	\pgfplotsset{ticks=none}
\tikzset{mark size=1}
\begin{tikzpicture}[scale=0.95]
	\begin{axis}[ xmin=0, ymin=0, xmax=20, ymax=20, ytick={0,7,11,13,20}, xtick={0,7,11,13,20}]
	\addplot[mark=none, black] coordinates {(13,7) (20,7)};
	\addplot[mark=none, black] coordinates {(13,8) (20,8)};
	\addplot[mark=none, black] coordinates {(13,9) (20,9)};
	\addplot[mark=none, black] coordinates {(13,10) (20,10)};
	\addplot[mark=none, black] coordinates {(13,11) (20,11)};
	\addplot[mark=none, black] coordinates {(11,13) (11,20)};
	\addplot[mark=none, black] coordinates {(9,13) (9,20)};
	\addplot[mark=none, thick, style=dotted] coordinates{(13,12)(20,12)};
	\addplot[mark=none, thick, style=dotted]coordinates{(8,13)(8,20)};
	\addplot [pattern = north east lines, draw=white]coordinates{(13,13)(20,13)(20,20)(13,20)(13,13)};
	
	\addplot[only marks] coordinates{(13,7) (13,8) (13,9) (13,10) (13,11) (8,12) (13,13)}; 
	\addplot[only marks, mark=o] coordinates{(9,12) (11,12)}; 
	\node[font=\tiny] at (12.3,13) {$\bs{c(S')}$};
	\node[font=\tiny] at (7.5,11.5) {$(\tilde{x},f_2)$};
	\node[font=\tiny] at (12.5,6.2) {$\bs{c(S^2)}$};

\end{axis}
\end{tikzpicture}
\caption{Representation of $T_X$ in case $X\neq\emptyset$. The elements of $T_X$ are represented with dashed lines and white dots.}
	\end{figure}
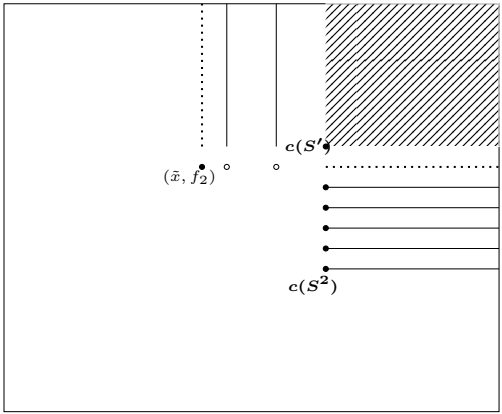

\textbf{Case 2:} $\bs{f(S')}\in S'$. In this case we define $T=\Delta(\bs{f(S')})$. If we consider \[S=S'\cup T;\] also in this case it is easy to observe that $S$ is a good semigroup and\\ $T=T((f_1,\infty),(\infty,f_2))$ is a track of $S$.

Removing the track $T$ from $S$, we have that $|A_f(S)|=|A_f(S')|-1$, $g(S_2)=g(S'_2)$ and $g(S_1)=g(S'_1)$.
Hence, using again  Corollary \ref{delgform} we have that $S'$ is obtained removing a track by the good semigroup $S$ having genus $g(S')-1$.\\

	\begin{figure}[H]
		\begin{center}
	\tikzset{mark size=1}
	\pgfplotsset{ticks=none}
\tikzset{mark size=1}
\begin{tikzpicture}
	\begin{axis}[ xmin=0, ymin=0, xmax=20, ymax=20, ytick={0,7,11,13,20}, xtick={0,7,11,13,20}]
	\addplot[mark=none, thick, style=dotted]coordinates{(12,13)(12,20)};
	\addplot[mark=none, thick, style=dotted]coordinates{(13,12)(20,12)};
	\addplot[mark=none, black] coordinates {(13,7) (20,7)};
	\addplot[mark=none, black] coordinates {(13,8) (20,8)};
	\addplot[mark=none, black] coordinates {(13,9) (20,9)};
	\addplot[mark=none, black] coordinates {(13,10) (20,10)};
	\addplot[mark=none, black] coordinates {(13,11) (20,11)};
	\addplot[mark=none, black] coordinates {(7,13) (7,20)};
	\addplot[mark=none, black] coordinates {(8,13) (8,20)};
	\addplot[mark=none, black] coordinates {(9,13) (9,20)};
	\addplot[mark=none, black] coordinates {(10,13) (10,20)};
	\addplot[mark=none, black] coordinates {(11,13) (11,20)};

	\addplot [pattern = north east lines, draw=white]coordinates{(13,13)(20,13)(20,20)(13,20)(13,13)};
	\addplot[only marks] coordinates{(7,7) (7,8) (7,9) (7,10) (7,11) (7,12) (8,7) (8,8) (8,9) (8,10) (8,11) (8,12) (9,7) (9,8) (9,9) (9,10) (9,11) (9,12) (10,7) (10,8) (10,9) (10,10) (10,11) (10,12) (11,7) (11,8) (11,9) (11,10) (11,11) (11,12) (12,7) (12,8) (12,9) (12,10) (12,11) (13,7) (13,8) (13,9) (13,10) (13,11) (7,13) (8,13) (9,13) (10,13) (11,13) (12,12) (13,13)}; 
	\node[font=\tiny] at (12.5,12.5) {$\bs{c(S')}$};
	\node[font=\tiny] at (6.5,6.2) {$\bs{c(S)}$};

\end{axis}
\end{tikzpicture}
\caption{Representation of $T$. The elements of $T$ are represented with dashed lines.}
\end{center}
\end{figure}
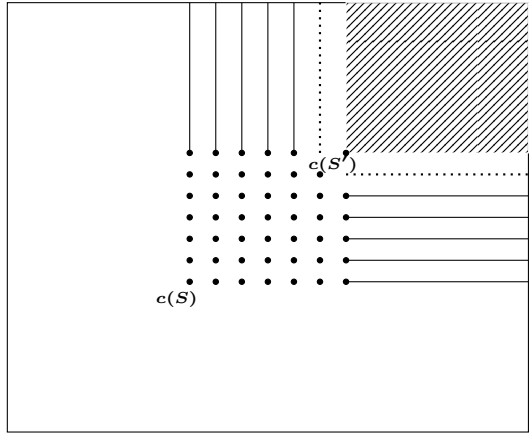
\end{proof}

\begin{teo}
\label{genuson}
Given a good semigroup $S$ with genus $g(S)$, removing a track of $S$ we obtain a semigroup with genus $g(S)+1$.
\end{teo}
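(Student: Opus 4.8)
The plan is to reduce the theorem to the single identity $d(S\setminus S')=1$ and then to prove this by a local analysis of the position of $T$ inside $S$.

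Put $S':=S\setminus T$; by Lemma \ref{semcont} this is a good semigroup with $S'\subsetneq S$, and I write $T=T((\bs{\alpha_1},\ldots,\bs{\alpha_n}))$ and $\bs{\gamma_i}=\bs{\alpha_i}\oplus\bs{\alpha_{i+1}}$. Since $T$ never contains an element $\geq\bs{c(S)}$ we have $\bs{c(S)}+\N^2\subseteq S'$, and this makes $S$ a good relative ideal of $S'$: $S+S'\subseteq S$ because $S$ is a semigroup, $\bs{c(S)}+S\subseteq\bs{c(S)}+\N^2\subseteq S'$ with $\bs{c(S)}\in S'$, and $S$ satisfies (G1) and (G3). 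As $\N^2$ too is a good relative ideal of $S'$ and $\N^2\supseteq S\supseteq S'$, Proposition \ref{propBDF}.1) yields $g(S')=d(\N^2\setminus S')=d(\N^2\setminus S)+d(S\setminus S')=g(S)+d(S\setminus S')$. So the theorem is exactly the assertion $d(S\setminus S')=1$; here $d(S\setminus S')\geq 1$ is immediate from Proposition \ref{propBDF}.2) because $S'\neq S$, and the whole point is the upper bound $d(S\setminus S')\leq 1$.

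For the upper bound I would use Corollary \ref{delgform} for $S$ and $S'$ simultaneously. Since $S'_j\subseteq S_j$ and $g(S_j)$ is just the number of gaps of the numerical semigroup $S_j$, one has $g(S'_j)-g(S_j)=|S_j\setminus S'_j|$, so
$$d(S\setminus S')=|S_1\setminus S'_1|+|S_2\setminus S'_2|+\bigl(|A_f(S')|-|A_f(S)|\bigr).$$
I would then run a case analysis on the type of the starting point $\bs{\alpha_1}$ and the ending point $\bs{\alpha_n}$, which (given $\alpha_{11}<\ldots<\alpha_{n1}$) can only be a finite maximal or an infinite maximal, an infinite starting point necessarily of the form $(\alpha_{11},\infty)$ and an infinite ending point of the form $(\infty,\alpha_{n2})$ — this parallels precisely the cases of the proof of Theorem \ref{teo1}. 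The two facts driving the computation are: (i) the column of $\bs{\alpha_n}$ (resp. the row of $\bs{\alpha_1}$) is entirely contained in $T$ exactly when $\bs{\alpha_n}$ (resp. $\bs{\alpha_1}$) is a finite maximal — for then $\Delta^S(\bs{\alpha_n})=\emptyset$, so nothing of $S$ lies strictly above it in its column, and the tail ${}_1\Delta^S(\bs{\alpha_n})$ covers everything below — while every column $\alpha_{i1}$ with $i<n$ survives in $S'$ because $\bs{\gamma_i}\in S\setminus T$, and one shows via property (G3) that no other column (and no other row) of $S$ can be swallowed by $T$; (ii) the finite maximals of $S$ lying in $T$ are exactly those among $\bs{\alpha_1},\ldots,\bs{\alpha_n}$, and the finite maximals of $S'$ that are not finite maximals of $S$ are exactly the corners $\bs{\gamma_1},\ldots,\bs{\gamma_{n-1}}$ (each has $\Delta^S(\bs{\gamma_i})\neq\emptyset$ but $\Delta^S(\bs{\gamma_i})\subseteq T$, and one checks no further element of $S'$ acquires an empty $\Delta$). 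Reading off the three summands, one gets $1$ in every case — writing the triple $(|S_1\setminus S'_1|,|S_2\setminus S'_2|,|A_f(S')|-|A_f(S)|)$: two finite endpoints give $(1,1,(n-1)-n)$; two infinite endpoints give $(0,0,(n-1)-(n-2))$; one finite and one infinite endpoint gives $(1,0,0)$ or $(0,1,0)$; and a one-point track gives $(1,1,-1)$, $(1,0,0)$ or $(0,1,0)$ according as its vertex is a finite maximal, an element $(\alpha_1,\infty)$ or an element $(\infty,\alpha_2)$.

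The main obstacle is this bookkeeping: one must check that at most one column and at most one row of $S$ is absorbed by $T$, and that the finite maximals destroyed (the $\bs{\alpha_i}$) are matched against those created (the $\bs{\gamma_i}$) with net discrepancy exactly one — an argument resting on the monotone--staircase shape of a track, on the fact that all its elements are irreducible, and on property (G3). A leaner route to $d(S\setminus S')\leq 1$, bypassing $A_f$, is to write $d(S\setminus S')=d_S(\bs{0},\bs{\delta})-d_{S'}(\bs{0},\bs{\delta})$ for $\bs{\delta}$ far beyond $\bs{c(S)}$ and $\bs{c(S')}$ (both semigroups are local, so their minimal elements are $\bs{0}$), expand both terms as telescoping sums along a lattice path from $\bs{0}$ to $\bs{\delta}$ using Proposition \ref{propBDF}.1) and 3), so that each unit step contributes $0$ or $1$ to the difference and contributes $1$ exactly when the coordinate ray of $S$ attached to that step is non-empty and entirely contained in $T$; choosing the path to stay weakly to the lower-left of the staircase $T$, exactly one step — the one where the path first meets $T$ — has this property.
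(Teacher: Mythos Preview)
Your main argument is essentially the paper's proof: both apply Corollary \ref{delgform} to $S$ and to $S'$ and then run the same case analysis on whether the endpoints $\bs{\alpha_1},\bs{\alpha_n}$ are finite or infinite maximals, balancing the finite maximals destroyed (the $\bs{\alpha_i}$) against those created (the $\bs{\gamma_i}$) and tracking the change in the projections $S_1,S_2$. Your opening reduction to the identity $d(S\setminus S')=1$ is a clean reframing the paper does not spell out, but once you expand $g(S')-g(S)$ via Corollary \ref{delgform} the computation is identical to the paper's.

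One slip to fix: the assertion ``$T$ never contains an element $\geq\bs{c(S)}$'' is false in general --- the \emph{beyond tracks} of Definition \ref{specpar} are defined precisely by containing such an element, and the paper's remark after Definition \ref{track} only says that tracks avoid elements $\geq\bs{c(S)}+\bs{e(S)}$. This does not damage your reduction: to exhibit $S$ as a good relative ideal of $S'$ you may instead take $\bs{\alpha}=\bs{c(S')}$ (which exists since $S'$ is good by Lemma \ref{semcont}), so that $\bs{c(S')}+S\subseteq\bs{c(S')}+\N^2\subseteq S'$ with $\bs{c(S')}\in S'$.

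The ``leaner route'' via a lattice path and Proposition \ref{propBDF}.3) that you sketch at the end is a genuinely different idea and could be attractive, but as stated it is only a heuristic: justifying that \emph{exactly one} unit step has its coordinate ray entirely swallowed by $T$ --- for an arbitrary track, not just the special ones of Theorem \ref{teo1} --- ultimately requires the same staircase bookkeeping (monotonicity of the $\alpha_{i1}$, irreducibility, property (G3)) that drives the main case analysis.
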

\begin{proof}
Let $S$ be a good semigroup, we  consider a track $T:=T((\bs{\alpha_1},\ldots,\bs{\alpha_n}))$. With the same notation used in Lemma \ref{semcont}, we consider $S'=S\setminus T$. We want to prove that $g(S')=g(S)+1$.
We observe that by Definition \ref{pitrack}, $\bs{\alpha_i}\in A_f(S)$, for any $i\in\{2,\ldots,n-1\}$.\\
Let us consider first the case $\bs{\alpha_1}=(x,\infty)$. Now we can distinguish two cases:\\
\textbf{Case 1:} $\bs{\alpha_n}\in A_f(S)$. In this situation, removing the track $T$, we remove the elements $\bs{\alpha_i}$ with $i=1,\ldots,n$, hence we loose $n-1$ finite maximals, in $S'$. At the same time the elements $\bs{\gamma_i}=\min\{\bs{\alpha_i},\bs{\alpha_{i+1}}\}\in S'$ with $i=1,\ldots, n-1$ become finite maximals of $S'$; therefore $|A_f(S)|=|A_f(S')|$. If $\bs{\beta}\in {}_1\Delta^S(\bs{\alpha_n})$, by property (G3), there exists an element $\bs{\delta}\in \Delta^S_1(\bs{\beta})\cap S'$; clearly we have $\delta_2=\beta_2$. Furthermore we have $\alpha_{i,2}=\gamma_{i-1,2}$, for any $i\in\{2,\ldots,n\}$. Hence we proved that $g(S_2')=g(S_2)$.
Now we observe that $\Delta_1^{S'}(\bs{\alpha_n})=\emptyset$, since $S'\subset S$ and $\bs{\alpha_n}$ was a maximal of $S$, furthermore $_1\Delta^{S'}(\bs{\alpha_n})=\emptyset$, from the definition of $S'$. Hence in $S'$ we loose an element in the first projection; we have $g(S_1')=g(S_1)+1$.
By Corollary \ref{delgform}:
$$g(S')=g(S'_1)+g(S'_2)+|A_f(S')|=g(S_1)+1+g(S_2)+|A_f(S)|=g(S)+1.$$
\textbf{Case 2:} $\bs{\alpha_n}=(\infty,y)$. In this situation, by removing the track $T$, we remove the elements $\bs{\alpha_i}$ with $i=1,\ldots,n$, hence we loose $n-2$ finite maximals in $S'$. At the same time, the elements $\bs{\gamma_i}=\min\{\bs{\alpha_i},\bs{\alpha_{i+1}}\}$ with $i=1,\ldots, n-1$, become finite maximals in $S'$; hence $|A_f(S)|=|A_f(S')|-1$. Since $\alpha_{i,2}=\gamma_{i-1,2}$, for any $i\in\{2,\ldots,n\}$ and $\alpha_{i,1}=\gamma_{i,1}$, for any $i\in\{1,\ldots,n-1\}$; we have respectively $g(S_2')=g(S_2)$ and $g(S'_1)=g(S_1)$.
Therefore, again by Corollary \ref{delgform}:
$$g(S')=g(S'_1)+g(S'_2)+|A_f(S')|=g(S_1)+g(S_2)+|A_f(S)|+1=g(S)+1.$$

Now we suppose $\bs{\alpha_1}\in A_f(S)$; in this case, if $\bs{\alpha_n}=(\infty,y)$, the proof is analogous to the Case 1 that we have seen above, so we can suppose $\bs{\alpha_n}\in A_f(S)$.
In this case, by removing the track $T$, we remove the elements $\bs{\alpha_i}$, $i=1,\ldots,n$, hence we loose $n$ finite maximals in $S'$. At the same time the elements of $\gamma_i$ $i=1,\ldots, n-1$ become finite maximals in $S'$; hence $|A_f(S)|=|A_f(S')|+1$. With the same argument that we used in Case 1, it is easy to observe that $g(S_1')=g(S_1)+1$ and $g(S_2')=g(S_2)+1$. Hence we have:
$$g(S')=g(S'_1)+g(S'_2)+|A_f(S')|=g(S_1)+1+g(S_2)+1+|A_f(S)|-1=g(S)+1.$$
It is easy to notice that if the starting point and the ending point are the same the proof can be repeated in a similar way.
\end{proof}
We have observed that tracks play the same role as minimal generators in the case of numerical semigroups.
Now we want to show that, as in the numerical case, it is not necessary to consider all the minimal generators in order to build the tree. In fact, as we are going to show, it is sufficient to consider some special tracks.
\begin{defi} 
\label{specpar}
Given a local good semigroup $S\subseteq \N^2$, we say that  $T((\bs{\alpha_1},\ldots,\bs{\alpha_n}))$ is a \textit{beyond track}, if $\{\bs{\alpha}\in S, \bs{\alpha}\geq \bs{c(S)}\}\cap T((\bs{\alpha_1},\ldots,\bs{\alpha_n}))\neq \emptyset$.

Furthermore, we denote by $$\operatorname{BT}(S)=\{ T=T((\bs{\alpha_1},\ldots,\bs{\alpha_n}))\hspace{0.1cm} | \hspace{0.1cm} T \textrm{ is a beyond track of } S\}.$$
\end{defi}
If $S'$ is a good semigroup of $\N^2$ obtained from a good semigroup $S$ by removing a track, we say that $S$ is a \emph{parent} of $S'$ (or equivalently $S'$ is a \emph{son} of $S$). We say that $S$ is a \emph{special parent} of $S'$ (or equivalently $S'$ is a \emph{special son} of $S$), if $S'$ is obtained from $S$ removing a beyond track.
\begin{lem}
\label{condspec}
If $S$ is a special parent of $S'$, then $\bs{c(S)}<\bs{c(S')}.$
\end{lem}
\begin{proof}
Since $S$ is a special parent of $S'$, we have $S'=S\setminus T$, where $T$ is a beyond track of $S$. In particular we have $S'\subseteq S$, which implies that $\bs{c(S)}\leq \bs{c(S')}.$ Thus, we need to prove that $\bs{c(S)}\neq\bs{c(S')}$. Let us assume by contradiction that $\bs{c(S)}=\bs{c(S')}$.
Since $T$ is a beyond track of $S$, there exists an element $\bs{\beta}\in T$ with $\bs{\beta}\geq \bs{c(S)}$.
We recall that for a good semigroup $S'$ we have that $\bs{\alpha} \in S' \iff \min(\bs{\alpha},\bs{c(S')}) \in S'$. Thus, since $\bs{\beta}\notin S'$, we have $$\bs{c(S')}=\bs{c(S)}=\min(\bs{\beta},\bs{c(S)})=\min(\bs{\beta},\bs{c(S')}) \notin S',$$
which is a contradiction.
\end{proof}
 By Theorem \ref{teo1} we can deduce the following corollary.
\begin{cor}
\label{specparcar}
Let $S'$ be a good semigroup $S'\neq \N^2(1,1)$, with $\bs{f(S')}=(f_1,f_2)$.
We denote by $p=|\{ i \in \{1,2\} \hspace{0.1cm}| \hspace{0.1cm} f_i \neq 0 \} |$.
\begin{enumerate}
\item[1)] If $\bs{f(S')}\in S'$, then $S'$ has exactly one special parent.
\item[2)] If $\bs{f(S')}\notin S'$, then $S'$ has exactly $p$ special parents. 
\end{enumerate}
\end{cor}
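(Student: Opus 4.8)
The plan is to get the special parents directly from the explicit constructions appearing in the proof of Theorem~\ref{teo1}, and then to show that these are the only ones, so that they can be counted.

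\emph{Existence.} Recall the good semigroups built in the proof of Theorem~\ref{teo1}. If $\bs{f(S')}\in S'$, that proof produces $S=S'\cup\Delta(\bs{f(S')})$, for which $T=\Delta(\bs{f(S')})=T((f_1,\infty),(\infty,f_2))$ is a track of $S$. Since $\bs{f(S')}+\N^d\subseteq S$ one has $\bs{c(S)}\leq\bs{f(S')}$, so $T$ contains points $(f_1,y)\geq\bs{c(S)}$ with $y$ large; hence $T$ is a beyond track and $S$ is a special parent. If $\bs{f(S')}\notin S'$, the same proof produces $S^{2}=S'\cup T_X$ precisely when the coordinate $f_2$ is nonzero (so that $X$ is defined) and $S^{1}=S'\cup T_Y$ precisely when $f_1\neq 0$; since $T_X\supseteq\Delta_2(\bs{f(S')})$, $T_Y\supseteq\Delta_1(\bs{f(S')})$, and $\bs{c(S^{i})}<\bs{c(S')}$ by Lemma~\ref{condspec}, both removed tracks are beyond tracks. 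When $p=2$ the semigroups $S^{1}$ and $S^{2}$ are distinct, because $S^{i}\setminus S'$ is the track $T_Y$ in one case and $T_X$ in the other, and (e.g. the point $(f_1+1,f_2)$ lies in $T_X$ but not in $T_Y$) these tracks meet the Frobenius lines $\{\gamma_1=f_1\}$ and $\{\gamma_2=f_2\}$ in incompatible ways. This already gives at least one, respectively at least $p$, special parents.

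\emph{Uniqueness.} Let $S$ be an arbitrary special parent, $S'=S\setminus T$ with $T$ a beyond track of $S$; by Lemma~\ref{condspec}, $\bs{c(S)}<\bs{c(S')}$. First I would note that $T=S\setminus S'$ consists of gaps of $S'$ and, being a track of $S$, contains no point too far above the conductor (the observation following Definition~\ref{track}, that a track never contains elements $\geq\bs{c(S)}+\bs{e(S)}$). Next I would exploit the shape of $T=T((\bs{\alpha_1},\dots,\bs{\alpha_n}))$: the interior vertices $\bs{\alpha_2},\dots,\bs{\alpha_{n-1}}$ are finite maximals of $S$ (as already remarked in the proof of Theorem~\ref{genuson}), hence not $\geq\bs{c(S)}$, since an element above the conductor always has a strictly larger element of $S$ sharing a coordinate with it and so is never a finite maximal. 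Therefore $T$ can meet the region $\{\bs{\gamma}\geq\bs{c(S)}\}$ only through one of the rays $\Delta^S(\bs{\gamma_i})$ or the extremal legs ${}_2\Delta^S(\bs{\alpha_1})$, ${}_1\Delta^S(\bs{\alpha_n})$. Looking at which coordinate of $\bs{c(S)}$ is strictly smaller than the corresponding one of $\bs{c(S')}$, and applying property (G3) in $S'$ to the elements of that ray/leg, one should force it to sit on the line $\gamma_1=f_1$ or $\gamma_2=f_2$ — that is, $T$ must be of the $T_X$, $T_Y$ or $\Delta(\bs{f(S')})$ type. Since then both $T=S\setminus S'$ and $\bs{c(S)}=\bs{c(S'\cup T)}$ are completely determined by that choice, $S$ must coincide with one of the semigroups produced in the existence part: a single one when $\bs{f(S')}\in S'$ (the two legs collapse to the corner $\Delta(\bs{f(S')})$), and exactly $p$ of them when $\bs{f(S')}\notin S'$, one for each nonzero coordinate of $\bs{f(S')}$. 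Combining the two halves yields the stated counts.

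\emph{Main obstacle.} The delicate point is exactly this rigidity step: proving that the part of a beyond track lying above $\bs{c(S)}$ cannot ``wander'', but is pinned to the Frobenius lines of $S'$, so that the pair $(\bs{c(S)},T)$, and hence $S$, is uniquely recovered from the at-most-two admissible coordinates of $\bs{f(S')}$. This requires a careful combined use of Definition~\ref{pitrack}, property (G3) for $S'$, and the conductor drop of Lemma~\ref{condspec}; once it is established, the genus bookkeeping of Theorem~\ref{teo1} and Corollary~\ref{delgform} serves only as a consistency check rather than as part of the argument.
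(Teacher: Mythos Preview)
Your existence half is exactly the paper's: the special parents are the semigroups $S$, $S^1$, $S^2$ built in Theorem~\ref{teo1}, and the removed tracks are beyond tracks by Lemma~\ref{condspec}.

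For uniqueness your route diverges from the paper's and is more laborious. You propose to classify which pieces of an arbitrary beyond track $\overline{T}$ can reach the region $\{\bs{\gamma}\geq\bs{c(\overline{S})}\}$ (interior $\bs{\alpha_i}$'s are finite maximals, so only the rays $\Delta^S(\bs{\gamma_i})$ or the extremal legs can), and then argue that such a piece must lie on a Frobenius line of $S'$. The paper bypasses this structural analysis entirely: from $\bs{c(\overline{S})}<\bs{c(S')}$ (Lemma~\ref{condspec}) it reads off directly an element $\bs{\beta}\in\Delta(\bs{f(S')})\cap\overline{T}$; then one application of (G3) \emph{in $\overline{S}$} (not in $S'$, as you suggest) to $\bs{\beta}$ and $\bs{f(S')}$ produces, when $\bs{f(S')}\in S'$, an element of $\overline{T}$ on the other half of $\Delta(\bs{f(S')})$ as well. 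Since a track meets any vertical or horizontal line in a ray, and that ray cannot contain $\bs{f(S')}\in S'$, the endpoints $(f_1,\infty)$ and $(\infty,f_2)$ are forced. When $\bs{f(S')}\notin S'$, the same first step gives $\Delta_1(\bs{f(S')})\subseteq\overline{T}$ or $\Delta_2(\bs{f(S')})\subseteq\overline{T}$, fixing one endpoint; the other endpoint is then pinned by the maximality of $\tilde{y}$ (resp.\ $\tilde{x}$).

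So your strategy would work, but the step you flag as the ``main obstacle'' (pinning the beyond part to a Frobenius line via (G3) in $S'$ and a case analysis on which conductor coordinate drops) is precisely what the paper avoids: Lemma~\ref{condspec} already hands you an element of $\overline{T}$ on $\Delta(\bs{f(S')})$, and one use of (G3) in $\overline{S}$ finishes the localisation. Replacing your structural detour with this direct observation turns the vague ``one should force it'' into a two-line argument.
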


\begin{proof}
$1)$ If we define the set $T=\Delta(\bs{f(S')})$ as in the proof of  Theorem \ref{teo1}, the good semigroup $S=S'\cup T$ is trivially a special parent of $S$.
Now we want to prove that, if there exists a good semigroup $\overline{S}$ and a beyond track $\overline{T}$ of $\overline{S}$, such that $S'=\overline{S}\setminus \overline{T}$, then $\overline{T}=T$.\\
Since $\bs{f(S')}\in S'$ then, by the previous lemma, there exists $\bs{\beta}\in \Delta(\bs{f(S')})\cap \overline{T}$. By property (G3) of good semigroups, it follows $\Delta(\bs{f(S')})\subseteq \overline{T}$. In this case, since $(f_1,\infty)$ and $(\infty,f_2)$, are respectively a starting point and an ending point of the track $\overline{T}$, we have $\overline{T}=\Delta(\bs{f(S')})=T$.

$2)$ If we define, when it is possible, the sets $T_X$ and $T_Y$ as in the proof of Theorem \ref{teo1}, the good semigroups $S^1$ and $S^2$ are trivially special parents of $S'$. Now we want to prove that, if $\overline{S}$ is a good semigroup and there exists a beyond track $\overline{T}$ of $\overline{S}$ such that $S'=\overline{S}\setminus \overline{T}$, then $\overline{T}=T_X$ or $\overline{T}=T_Y$.\\
By the previous lemma, there exists $\bs{\beta}\in \Delta(\bs{f(S')})\cap \overline{T}$. Thus, by property (G3) of good semigroups, it follows either $\Delta_1(\bs{f(S')})\subseteq \overline{T}$ or $\Delta_2(\bs{f(S')})\subseteq \overline{T}$. If we suppose $\Delta_1(\bs{f(S')})\subseteq \overline{T}$, then we prove that $\overline{T}=T_Y$. We start observing that $(f_1,\infty)$ is necessarily the starting point of $\overline{T}$. If the set $Y$ is empty, then it is also the ending point, hence $\overline{T}=T((f_1,\infty))=\Delta_1(\bs{f(S')})=T_Y$.\\
If $Y\neq \emptyset$, we observe that since $\Delta_1(\bs{f(S')})\subseteq \overline{T}$, then $\overline{T}=T((f_1,\infty),(\infty, y))$, where $(\infty, y)\in I_A(\overline{S})$. We recall that, in the proof of Theorem \ref{teo1}, we denoted by $\tilde{y}=\max\{y\hspace{0.1cm}|\hspace{0.1cm} (f_1,y)\in S'\}$; by maximality of $\tilde{y}$, it follows $y=\tilde{y}$. Hence we have $\overline{T}=T((f_1,\infty),(\infty, \tilde{y}))=T_Y$.
Assuming $\Delta_2(\bs{f(S')})\subseteq \overline{T}$, repeating the same proof, it is easy to observe that $\overline{T}=T_X$.
\end{proof}

Now we can introduce the tree of local good semigroups with a fixed genus.\\
\bf Notations: \rm\\
Now, we denote by $\mathfrak{N}_g$ the set of good semigroups with genus $g$.
We build the following family of sets of good semigroups:
\begin{itemize}
    \item $A_1=\N^2(1,1)$;
    \item $A_{i}=\{S'\hspace{0.1cm}|\hspace{0.1cm} S'\text{, is a special son of } S\in A_{i-1}\}$. 
\end{itemize}
We want to show that all semigroups of genus $g+1$ are special sons of semigroups of genus $g$, in other words:
\begin{prop}
$A_g=\mathfrak{N}_g$, for all $g\in \N\setminus \{0\}$.
\end{prop}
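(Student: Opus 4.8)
The plan is to prove the equality $A_g = \mathfrak{N}_g$ by induction on $g$, establishing the two inclusions separately. The base case $g=1$ is immediate: by the locality assumption the only local good semigroup of genus $1$ is $\N^2(1,1)$ (its conductor must satisfy $c_S = 2$), so $A_1 = \{\N^2(1,1)\} = \mathfrak{N}_1$.

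For the inductive step, assume $A_g = \mathfrak{N}_g$. The inclusion $A_{g+1} \subseteq \mathfrak{N}_{g+1}$ is the easy direction: every element of $A_{g+1}$ is a special son of some $S \in A_g = \mathfrak{N}_g$, i.e. it is obtained from a genus-$g$ semigroup by removing a (beyond) track, and Theorem \ref{genuson} guarantees that removing any track raises the genus by exactly $1$, so the result has genus $g+1$. The harder inclusion is $\mathfrak{N}_{g+1} \subseteq A_{g+1}$. Take $S' \in \mathfrak{N}_{g+1}$; since $g+1 \geq 2$ we have $S' \neq \N^2(1,1)$, so Theorem \ref{teo1} applies and $S'$ is obtained by removing a track from some good semigroup $S$ of genus $g$. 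The point I must nail down is that $S'$ can in fact be obtained from a genus-$g$ semigroup by removing a \emph{beyond} track, i.e. that $S'$ has a special parent. This is exactly what is delivered by the construction in the proof of Theorem \ref{teo1}: in both Case 1 ($\bs{f(S')} \notin S'$) and Case 2 ($\bs{f(S')} \in S'$) the parent exhibited there ($S^1$, $S^2$, or $S = S' \cup \Delta(\bs{f(S')})$) is built by adjoining a track that contains elements $\geq \bs{c(S')} \geq \bs{c}$ of that parent — more precisely, Corollary \ref{specparcar} states outright that $S'$ has at least one (exactly $p\geq 1$, resp. exactly one) special parent, and Theorem \ref{genuson} forces that special parent to have genus $g(S') - 1 = g$. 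Hence $S$ lies in $\mathfrak{N}_g = A_g$ by the inductive hypothesis, and $S'$, being a special son of $S$, lies in $A_{g+1}$ by definition.

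Putting the two inclusions together gives $A_{g+1} = \mathfrak{N}_{g+1}$, completing the induction. The main obstacle — the only place where any real content enters — is making sure that the parent produced by Theorem \ref{teo1} can always be taken to be a \emph{special} parent (beyond-track removal), rather than just any parent; this is precisely why Corollary \ref{specparcar} was proved beforehand, and invoking it (together with the genus-shift of Theorem \ref{genuson}) resolves the difficulty cleanly. The rest is a routine bookkeeping of genus values under the inductive hypothesis.
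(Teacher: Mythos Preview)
Your proof is correct and follows essentially the same approach as the paper: induction on $g$, with $A_{g+1}\subseteq\mathfrak{N}_{g+1}$ coming from Theorem~\ref{genuson} and the reverse inclusion from the existence of a special parent (Corollary~\ref{specparcar}) whose genus is forced to be $g$ again by Theorem~\ref{genuson}. You are slightly more explicit than the paper in verifying the base case and in flagging that the special-parent property (not just Theorem~\ref{teo1}) is what is really needed, but the argument is the same.
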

\begin{proof}
We work by induction on $g$. We suppose $A_{g-1}=\mathfrak{N}_{g-1}$ and we prove that $A_{g}=\mathfrak{N}_{g}$. Immediately by \ref{genuson}, $A_{g}\subseteq \mathfrak{N}_{g}$. If $S'\in \mathfrak{N}_{g}$, by Proposition \ref{specparcar}, there exists a special parent $S$ which, by Theorem \ref{genuson}, belongs to $\mathfrak{N}_{g-1}=A_{g-1}$; hence $S'\in A_{g}$.
\end{proof}
We proved that all good semigroups of genus $g$ can be obtained removing a special track from semigroups of genus $g-1$.
Hence, as a consequence of the previous proposition, the succession $A_1,A_2,\ldots,A_g$ defines a tree of local good semigroups in which the level $A_i$ consists of all good semigroups having genus $g$.
\begin{oss} \label{favson}
We denote by $n_g$ the number of local good semigroups of genus $g$. 
The fact that a good semigroup of $\mathfrak{N}_g$ can have two distinct special parents in $\mathfrak{N}_{g-1}$ implies that the formula
$$ n_g=\sum_{S \in \mathfrak{N}_{g-1}}{|\{ S'\hspace{0.1cm}|\hspace{0.1cm}S' \textrm{ is a special son of } S\} |}=\sum_{S \in \mathfrak{N}_{g-1}}{|\operatorname{BT}(S)|},$$
does not hold in general.

From the computational point of view, it would be convenient to determine, for each good semigroup $S$, a subset $T(S)\subseteq \operatorname{BT}(S) $ such that
$$ n_g=\sum_{S \in \mathfrak{N}_{g-1}}{|T(S)|},$$
for each $g \in \N.$

In order to do that, given a good semigroup $S$ with conductor $\bs{c(S)}=(c_1,c_2)$, we define

 \[
T(S)=
\left\{
\begin{aligned}
&\operatorname{BT}(S), &\textrm{ if } c_2=1 \\
&\{ T((\bs{\alpha_1},\ldots,\bs{\alpha_n}))\in \operatorname{BT}(S) \hspace{0.1cm}|\hspace{0.1cm} \bs{\alpha_n} \geq (\infty,c_2) \} &\textrm{ if }  c_2 \neq 1
\end{aligned}
\right.
\]

and we claim that $T(S)$ satisfies the required property.

It suffices to show that each good semigroup $S'$ has one and only one special parent $S$, such that $S'=S\setminus T$ with $T \in T(S)$.
Thus, let us consider an arbitrary good semigroup $S'$, with conductor $\bs{c(S')}=(c'_1,c'_2)$.\\
\textbf{Case 1:}  $c'_2=1$.
Corollary \ref{specparcar} tells us that $S'$ has only one special parent $S$, such that $S'=S \setminus T$, with $T \in \operatorname{BT}(S)$. Furthermore, denoting by $\bs{c(S)}=(c_1,c_2)$, we still have $c_2=1$. Hence, by definition of $T(S)$, we have $\operatorname{BT}(S)=T(S)$ and $T \in T(S)$ as required.
\\
\textbf{Case 2:}  $c'_2 \neq 1$ and $\bs{f(S')}=(f'_1,f'_2) \in S'$.
Corollary \ref{specparcar} tells us that $S'$ has only one special parent $S$, namely $S=S' \cup \Delta(\bs{f(S')})$. Notice that $\Delta(\bs{f(S')})$ is the track $T=T((f'_1,\infty),(\infty,f'_2)) \in \operatorname{BT}(S)$. Since we have $\bs{c(S)}\leq \bs{f(S')}$, it follows that $(\infty,f'_2) \geq (\infty,c_2), $ implying that $T \in T(S)$.
\\
\textbf{Case 3:}  $c'_2 \neq 1$ and $\bs{f(S')}=(f'_1,f'_2) \notin S'$.
In this case we have exactly two special parents 
\begin{align*}
    S^1=S'\cup T_Y,\\
    S^2=S'\cup T_X,
\end{align*}
where again considering the notations of \ref{teo1}
\[
T_Y= \left\{
\begin{aligned}
& T((f'_1,\infty)) \in \operatorname{BT}(S^1) &\textrm{ if } Y=\emptyset\\ 
& T((f'_1,\infty),(\infty,\tilde{y}))\in \operatorname{BT}(S^1)& \textrm{ if } Y\neq \emptyset
\end{aligned}
\right.
\]

\[
T_X= \left\{
\begin{aligned}
&T((\infty,f'_2))\in \operatorname{BT}(S^2) &\textrm{ if } X=\emptyset\\ 
&T((\tilde{x},\infty),(\infty,f'_2))\in \operatorname{BT}(S^2) &\textrm{ if } X\neq \emptyset
\end{aligned}
\right.
\]

Notice that $T_Y$ never belongs to $T(S^1)$, since $\bs{c(S^1)}=(c'_1-m,c'_2)$ with $m\geq 1$ and $(\infty,\tilde{y})<(\infty,c'_2).$ 
On the other hand, $T_X$ always belongs to $T(S^2)$ because, in both the possible definitions, the ending point is $(\infty,f'_2) \geq (\infty,\bs{c(S^2)}_2)$ (we recall that $\bs{c(S^2)}$ has the form $(c'_1,c'_2-n)$ with $n\geq 1$).
Thus $S^2$ is the required unique special parent of $S'$.

\end{oss}

Now we show with an example the construction of the tree up to genus $g=4$.
\begin{ex}
The starting point is the set
$$\mathfrak{N}_{1}=\{\N^2(1,1)\}.$$
The beyond tracks of $\N^2(1,1)$ are: $T_1=T((\infty,1))$, $T_2=T((1,\infty))$, $T_3=T((1,\infty),(\infty,1))$. If we denote by $S_i$ the special son associated to $T_i$, we have: $S_1=\{(0,0),(1,2)\}$, $S_2=\{(0,0),(2,1)\}$, $S_3=\{(0,0), (1,1), (2,2)\}$.\\
$$\mathfrak{N}_{2}=\{S_1,S_2,S_3\}.$$
Let us consider $S_1=\{(0,0),(1,2)\}$, we have the beyond tracks:

$T_{1,1}=T(( 1, \infty), (\infty, 2))$,  $T_{1,2}=T((1, \infty),(\infty,3))$, $T_{1,3}=T((1, \infty))$,
  $T_{1,4}=T((\infty, 2))$, $T_{1,5}=T((\infty, 3))$.
Notice that Remark \ref{favson} tells us that we can avoid to compute the son of $S_1$ with respect to the track $T_{1,3} \notin T(S_1)$.
We obtain:
$$S_{1,1}=\{(0,0), (1,2), (2,3)\},\hspace{0.5cm} S_{1,2}=\{(0,0), (1,2), (1,3), (2,2), (2,4)\},$$
$$S_{1,3}=\{(0,0), (2,2)\},\hspace{0.5cm} S_{1,4}=\{(0,0), (1,3)\},\hspace{0.5cm} S_{1,5}=\{(0,0), (1,2), (1,4)\}.$$
If we consider $S_2=\{(0,0), (2,1)\}$, we have the beyond tracks:

$T_{2,1}=T(( 2, \infty), (\infty, 1))$,  $T_{2,2}=T((3, \infty),(\infty,1))$, $T_{2,3}=T((\infty, 1))$,
  $T_{2,4}=T((2, \infty))$, $T_{2,5}=T((3, \infty))$.
We obtain:
$$S_{2,1}=\{(0,0), (2,1), (3,2)\},\hspace{0.5cm} S_{2,2}=\{(0,0), (2,1), (2,2), (3,1), (4,2)\},$$ $$S_{2,3}=\{(0,0), (2,2)\}\hspace{0.5cm} S_{2,4}=\{(0,0), (3,1)\},\hspace{0.5cm} S_{2,5}=\{(0,0), (2,1), (4,1)\}.$$
We notice that $S_{1,3}=S_{2,3}$, but in this case $T_{2,3}\in T(S_2)$.\\
For what concerns the good semigroup $S_3=\{ (0,0), (1,1), (2,2)\}$, we have only a beyond track:
$T_{3,1}=T(( 2, \infty), (\infty, 2))$.
We obtain:
$$S_{3,1}=\{(0,0), (1,1), (2,2), (3,3)\}.$$
$$\mathfrak{N}_{3}=\{S_{1,1},S_{1,2},S_{1,3},S_{1,4},S_{1,5},S_{2,1},S_{2,2},S_{2,3},S_{2,4},S_{2,5},S_{3,1}\}.$$
Now, we have:
\begin{itemize}
\item $S_{3,1}$ has got only one beyond track, so only a special son $$S_{3,1,1}=\{(0,0), (1,1),(2,2),(3,3),(4,4)\}.$$
\item $S_{2,1}$ has got two beyond tracks, thus
two special sons 
$$S_{2,1,1}=\{(0,0), (2,1),(4,2)\},\hspace{0.1cm} S_{2,1,2}=\{(0,0), (2,1),(3,2),(3,3),(4,2),(5,3)\}.$$
\item $S_{2,2}$ has no beyond tracks.
\item $S_{2,3}$ has got eight tracks. So it has got eight special sons:
$$S_{2,3,1}=\{(0,0), (2,2),(3,3)\},\hspace{0.1cm} S_{2,3,2}=\{(0,0), (2,2),(2,3),(3,2),(3,4)\},$$
$$S_{2,3,3}=\{(0,0), (2,2), (2,3), (2,4), (3,2), (3,3), (4,2), (4,4)\},$$ $$S_{2,3,4}=\{(0,0), (2,2), (2,3), (3,2), (4,3)\},\hspace{0.1cm} S_{2,3,5}=\{(0,0), (3,2)\},$$ $$S_{2,3,6}=\{(0,0), (2,2),(4,2)\},\hspace{0.1cm} S_{2,3,7}=\{(0,0), (2,3)\}, $$ $$\hspace{0.1cm} S_{2,3,8}=\{(0,0), (2,2),(2,4)\}.$$
\item $S_{2,4}$ has got seven tracks. So it has got seven special sons: $$S_{2,4,1}=\{(0,0), (3,1),(4,2)\},\hspace{0.1cm} S_{2,4,2}=\{(0,0), (4,1)\},$$ $$S_{2,4,3}=\{(0,0), (3,1),(3,2),(4,1),(5,2)\},\hspace{0.1cm} S_{2,4,4}=\{(0,0), (3,1),(5,1)\},$$ $$S_{2,4,5}=\{(0,0), (3,1),(3,2),(4,1),(5,2)\},$$ $$S_{2,4,6}=\{(0,0), (3,1),(4,1),(6,1)\},\hspace{0.1cm}S_{2,4,7}=\{(0,0), (3,2)\}.$$
\item $S_{2,5}$ has got four tracks. So it has got four special sons: 
$$S_{2,5,1}=\{(0,0), (2,1),(4,2)\},$$ $$S_{2,5,2}=\{(0,0), (2,1),(2,2),(4,1), (4,2), (5,1), (6,2)\},$$ $$S_{2,5,3}=\{(0,0), (2,1),(4,1),(6,1)\}, \hspace{0.1cm} S_{2,5,4}=\{(0,0), (2,2),(4,2)\}.$$
\item $S_{1,1}$ has got two tracks. So it has got two special sons: $$S_{1,1,1}=\{(0,0), (1,2),(2,3),(2,4),(3,3),(3,5)\},\hspace{0.1cm} S_{1,1,2}=\{(0,0), (1,2),(2,4)\}$$
\item $S_{1,2}$ has no tracks.
\item $S_{1,3}=S_{2,3}$ was studied before.
\item $S_{1,4}$ has got seven tracks. So it has got seven special sons:
$$S_{1,4,1}=\{(0,0), (1,3),(2,4)\},\hspace{0.1cm} S_{1,4,2}=\{(0,0), (1,3),(1,4),(2,3),(2,5)\},$$
$$S_{1,4,3}=\{(0,0), (1,3),(1,4),(1,5),(2,3),(2,4),(2,6)\},\hspace{0.1cm} S_{1,4,4}=\{(0,0), (2,3)\},$$
$$S_{1,4,5}=\{(0,0), (1,4)\},\hspace{0.1cm} S_{1,4,6}=\{(0,0), (1,3),(1,5)\},$$ $$ \hspace{0.1cm} S_{1,4,7}=\{(0,0), (1,3),(1,4),(1,6)\}$$
\item $S_{1,5}$ has got four tracks. So it has got four special sons:
$$S_{1,5,1}=\{(0,0), (1,2),(2,4)\}$$ $$S_{1,5,2}=\{(0,0), (1,3),(1,4),(1,5),(2,2),(2,4),(2,6)\}$$ $$S_{1,5,3}=\{(0,0), (2,2),(2,4)\},\hspace{0.1cm} S_{1,5,4}=\{(0,0), (1,2),(1,4),(1,6)\}.$$
\end{itemize}
Notice that 
\begin{itemize}
\item $S_{2,1,1}=S_{2,5,1}$, but $T_{2,1,1}\notin T(S_{2,1})$ while 
$T_{2,5,1} \in T(S_{2,5});$
\item $S_{2,3,5}=S_{2,4,7}$, but $T_{2,3,5}\notin T(S_{2,3})$ while 
$T_{2,4,7} \in T(S_{2,4});$
\item $S_{2,3,6}=S_{2,5,4}$, but $T_{2,3,6}\notin T(S_{2,3})$ while 
$T_{2,5,4} \in T(S_{2,5});$
\item $S_{1,4,4}=S_{2,3,7}$, but $T_{1,4,4}\notin T(S_{1,4})$ while 
$T_{2,3,7} \in T(S_{2,3});$
\item $S_{1,5,1}=S_{1,1,2}$, but $T_{1,5,1}\notin T(S_{1,5})$ while 
$T_{1,1,2} \in T(S_{1,1});$
\item $S_{1,5,3}=S_{2,3,8}$, but $T_{1,5,3}\notin T(S_{1,5})$ while 
$T_{2,3,8} \in T(S_{2,3}),$
\end{itemize}
thus, the repeated semigroups can be computed only one time by taking into account the  consequences of Remark \ref{favson}.
\end{ex}

\begin{figure}[H]
\begin{center}
	\begin{tikzpicture}
	[scale=0.75]
	\node (m1) at (0,26) {$n_1$};
	\node (m2) at (4,26) {$n_2$};
	\node (m3) at (8,26) {$n_3$};
    \node (m4) at (12,26) {$n_4$};
	\node (n0) at (0,10) {$\N(1,1)$};
	\node (n1) at (4,20)  {$S_1$};
	\node (n2) at (4,10) {$S_2$};
	\node (n3) at (4,0) {$S_3$};
	\node (n11) at (8,24)  {$S_{1,1}$};
	\node (n12) at (8,22)  {$S_{1,2}$};
	\node (n14) at (8,18)  {$S_{1,4}$};
	\node (n15) at (8,16) {$S_{1,5}$};
	\node (n21) at (8,14)  {$S_{2,1}$};
	\node (n22) at (8,12)  {$S_{2,2}$};
	\node (n23) at (8,10)  {$S_{2,3}$};
	\node (n24) at (8,8)  {$S_{2,4}$};
	\node (n25) at (8,6) {$S_{2,5}$}; 
	\node (n31) at (8,0) {$S_{3,1}$};
	\node (n111) at (12,24.25)  {$S_{1,1,1}$};
	\node (n112) at (12,23.75)  {$S_{1,1,2}$};
	\node (n141) at (12,21.75)  {$S_{1,4,1}$};
	\node (n142) at (12,21.25)  {$S_{1,4,2}$};
	\node (n143) at (12,20.75)  {$S_{1,4,3}$};
	\node (n145) at (12,19.75)  {$S_{1,4,5}$};
	\node (n146) at (12,19.25)  {$S_{1,4,6}$};
	\node (n147) at (12,18.75)  {$S_{1,4,7}$};
	\node (n152) at (12,17.25)  {$S_{1,5,2}$};
	\node (n154) at (12,16.25)  {$S_{1,5,4}$};
	\node (n212) at (12,14.75)  {$S_{2,1,2}$};
    \node (n231) at (12,12.75)  {$S_{2,3,1}$};
	\node (n232) at (12,12.25)  {$S_{2,3,2}$};
	\node (n234) at (12,11.25)  {$S_{2,3,3}$};
	\node (n235) at (12,10.75)  {$S_{2,3,4}$};
	\node (n237) at (12,9.75)  {$S_{2,3,7}$};
	\node (n238) at (12,9.25)  {$S_{2,3,8}$};
	\node (n241) at (12,8.25)  {$S_{2,4,1}$};
	\node (n242) at (12,7.75)  {$S_{2,4,2}$};
    \node (n243) at (12,7.25)  {$S_{2,4,3}$};
    \node (n244) at (12,6.75)  {$S_{2,4,4}$};
    \node (n245) at (12,6.25)  {$S_{2,4,5}$};
    \node (n246) at (12,5.75)  {$S_{2,4,6}$};
    \node (n247) at (12,5.25)  {$S_{2,4,7}$};
    \node (n251) at (12,4.25)  {$S_{2,5,1}$};
    \node (n252) at (12,3.75)  {$S_{2,5,2}$};
    \node (n253) at (12,3.25)  {$S_{2,5,3}$};
    \node (n254) at (12,2.75)  {$S_{2,5,4}$};
    \node (n311) at (12,0)  {$S_{3,1,1}$};
	\foreach \from/\to in {n0/n1,n0/n2,n0/n3,n1/n11,n1/n12,n1/n23,n1/n14,n1/n15,n2/n21,n2/n22,n2/n23,n2/n24,n2/n25,n3/n31,n11/n111,n11/n112,n14/n141,n14/n142,n14/n143,n14/n237,n14/n145,n14/n146,n14/n147,n15/n112,n15/n152,n15/n238,n15/n154,n21/n251,n21/n212,n23/n231,n23/n232,n23/n247,n23/n234,n23/n235,n23/n254,n23/n237,n23/n238,n24/n241,n24/n242,n24/n243,n24/n244,n24/n245,n24/n246,n24/n247,n25/n251,n25/n252,n25/n253,n25/n254, n31/n311}
	\draw (\from) -- (\to);
	\end{tikzpicture}
	\caption{Tree of local good semigroups until genus 4.}
	\end{center}
	\end{figure}
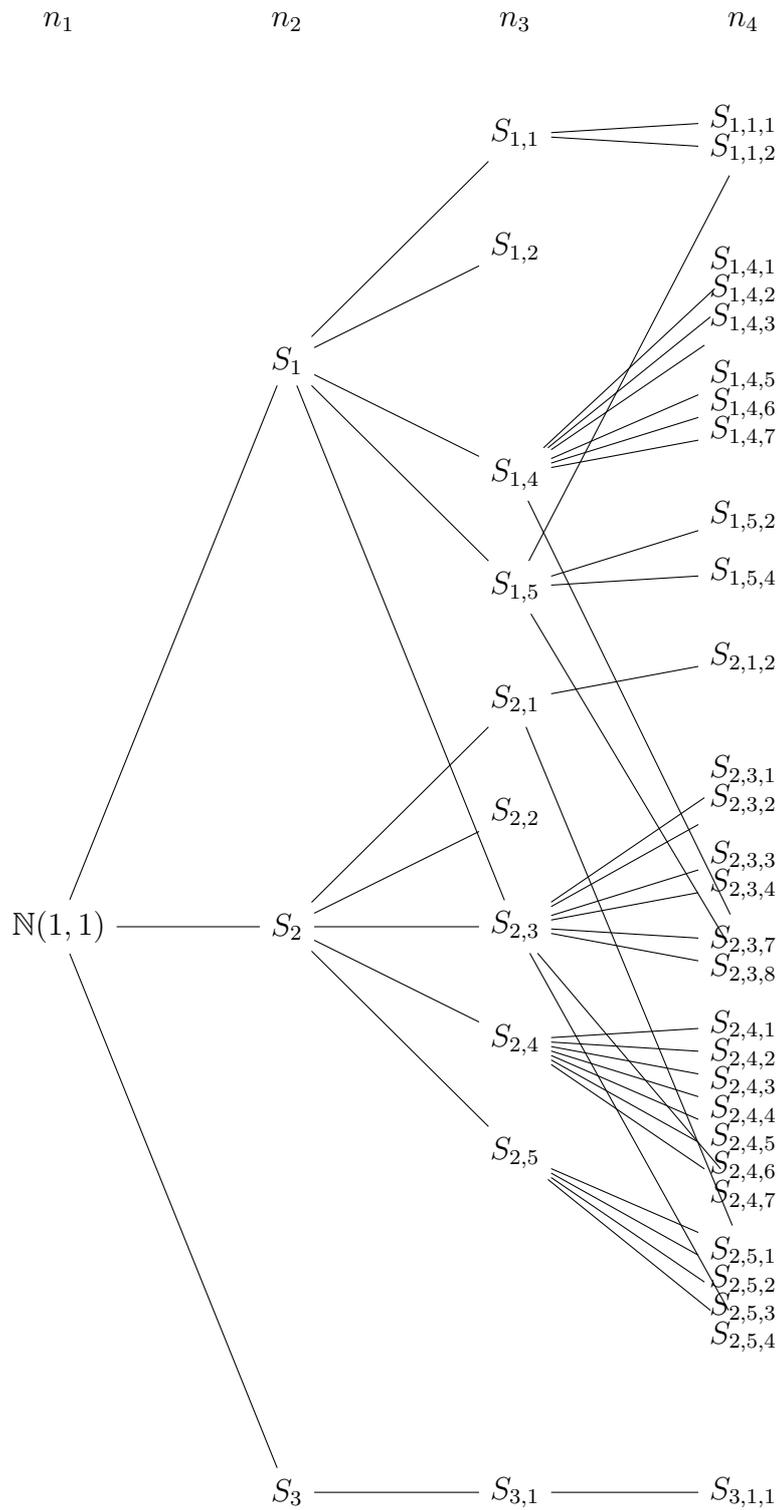

We implemented a function in "GAP" \cite{GAP4} using the package "NumericalSgps" \cite{GAPNumericalSgps} to find the special tracks and an algorithm to build the tree of local good semigroups by genus (where each level is built starting from the previous one). The following table contains the results obtained regarding the value of $n_g$ up to genus $27$:

\begin{table}[H]
\centering
\begin{tabular}{|l|l|l|l|}
\hline
$g$&$n_g$         & $\frac{n_g}{n_{g-1}}$   &  $\frac{n_g}{n_{g-1}}-\frac{n_{g-1}}{n_{g-2}}$ \\ \hline
1  & 1            &                         &                        \\ \hline
2  & 3            & 3                       &                        \\ \hline
3  & 10           & 3,333333                &       +0,333333        \\ \hline
4  & 29           & 2,9                     &       -0,433333        \\ \hline
5  & 78           & 2,689655                &       -0,210345        \\ \hline
6  & 211          & 2,705128                &       +0,015473        \\ \hline
7  & 555          & 2,630332                &       -0,074796        \\ \hline
8  & 1419         & 2,556757                &       -0,073575        \\ \hline
9  & 3658         & 2,577872                &       +0,021115        \\ \hline
10 & 9291         & 2,539913                &       -0,037959        \\ \hline
11 & 23559        & 2,53568                 &       -0,004233        \\ \hline
12 & 59750        & 2,536186                &       +0,000506        \\ \hline
13 & 151489       & 2,535381                &       -0,000805        \\ \hline
14 & 384962       & 2,541188                &       +0,005807        \\ \hline
15 & 981175       & 2,548758                &       +0,007570        \\ \hline
16 & 2509148      & 2,557289                &       +0,008531        \\ \hline
17 & 6446022      & 2,569008                &       +0,011719        \\ \hline
18 & 16643410     & 2,581966                &       +0,012958        \\ \hline
19 & 43206759     & 2,596028                &       +0,014062        \\ \hline
20 & 112813434    & 2,611014                &       +0,014986        \\ \hline
21 & 296385223    & 2,627216                &       +0,016202        \\ \hline
22 & 783663199    & 2,644069                &       +0,016854        \\ \hline
23 & 2085649918  & 2,661411                &       +0,017341       \\ \hline
24 & 5588023752   & 2,679272                &       +0,017861        \\ \hline
25 & 15074196720  & 2,697759                &       +0,018318        \\ \hline
26 & 40945190707  & 2,71624                 &       +0,018654       \\ \hline
27 & 111988822296 & 2,735091                &       +0,018847        \\ \hline
\end{tabular}
\caption{Table reporting the number of local good semigroups by genus}
\end{table}
In 2008 Bras Amorós observed that, in case of numerical semigroups the numerical sequence $\{n_g\}$ seemed to have the same behaviour of Fibonacci sequence and  conjectured that the ratio $\frac{n_g}{n_{g-1}}$ converges to the golden ratio \cite{Bras-Amoros2008}. In 2013 this was actually proved by Zhai \cite{zhai2013fibonacci}.
Looking at the previous table, even though we have not a great quantity of data, the tendency for local good semigroups of $\N^2$ appears to be different. In fact the difference $\frac{n_g}{n_{g-1}}-\frac{n_g-1}{n_{g-2}}$ seems to be an increasing function, hence the ratio seems to diverge.
\section{Relationship between genus and other notable elements}
\label{sec3}
\subsection{On the type of a good semigroup}
In this subsection we want to relate the genus and the type of a good semigroup $S\subseteq \N^2$ by generalizing a well known inequality that holds in the case of numerical semigroups.

First of all, we recall the concept of type of a good semigroup by following the definition introduced in \cite{type:good} that extends the one initially given in \cite{anal:unr}.

We write  $(\alpha_1,\alpha_2)\ll (\beta_1,\beta_2)$ if and only if $\alpha_1<\beta_1$ and $\alpha_2<\beta_2$. Now we can introduce a partial order on $S$ setting $(\alpha_1,\alpha_2)\leq\leq (\beta_1,\beta_2)$ if and only if $(\alpha_1,\alpha_2)=(\beta_1,\beta_2)$ or $(\alpha_1,\alpha_2)\neq (\beta_1,\beta_2)$ and $(\alpha_1,\alpha_2)\ll (\beta_1,\beta_2)$.
 
 Given a good semigroup $S \subseteq \N^2$, let us consider a set $A \subseteq S$ such that there exists $\bs{c} \in \N^2$ with $\bs{c}+\N^2 \subseteq S \setminus A$.
As described in \cite{type:good}, it is possible to build up a partition of such a set $A$, in the following way. 

Let us define, $D^{(0)}=\emptyset$:
\[B^{(i)}=\{\bs{\alpha}\in A \backslash (\bigcup_{j<i}D^{(j)}): \bs{\alpha} \text{ is maximal with respect to } \leq \leq\}\]
\[C^{(i)}=\{\bs{\alpha}\in B^{(i)}: \bs{\alpha}=\bs{\beta}_1\oplus \bs{\beta}_2 \text{ for some } \bs{\beta}_1,\bs{\beta}_2\in B^{(i)}\setminus\{\bs{\alpha}\}\}\]
\[D^{(i)}=B^{(i)}\backslash C^{(i)}.\]
For a certain $N\in \N$, we have $A=\bigcup_{i=1}^ND^{(i)}$ and $D^{(i)}\cap D^{(j)}=\emptyset$. According to the notations in \cite{type:good}, we rename these sets in an increasing order setting $A_i=D^{(N+1-i)}.$ Thus we have
$$A=\bigcup_{i=1}^NA_i,$$
and the sets $A_i$ are called \emph{levels} of $A$.

It was proved \cite[Thm. 2.7]{type:good} that, if $E=S \setminus A$ is a proper good ideal of $S$, then  $N=d(S\setminus E)$.
\begin{defi} \label{numberlevel}
Let us consider a set $A \subseteq \N^2$ such that there exists $\bs{c} \in \N^2$ with $\bs{c}+\N^2 \subseteq \N^2 \setminus A$.
We denote by $\operatorname{NL}(A)$ the integer such that 
$$ A=\bigcup_{i=1}^{\operatorname{NL}(A)}A_i,$$
is the partition in levels of $A$ described above.
\end{defi}
Now we want to generalize to good ideals a result proved for good semigroups in \cite{type:good}.
\begin{prop}
\label{levdis}
Let $I$ be a relative good ideal of a good semigroup $S\subseteq \N^2$. We consider $A\subseteq I$ satisfying the conditions of Definition \ref{numberlevel} and such that $E=I\setminus A$ is a relative good ideal of $S$. Then
$$\operatorname{NL}(A)=d(I\setminus E)$$
\end{prop}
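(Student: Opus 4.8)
The statement for good semigroups (the case $I=S$, $E$ a good ideal) is exactly \cite[Thm. 2.7]{type:good}, so the natural strategy is to reduce the general case to that one, or to re-run the same argument in the slightly more general setting of a relative good ideal $I$ in place of $S$. The key point in the quoted theorem is the following: the level decomposition $A=\bigcup_{i=1}^{\operatorname{NL}(A)}A_i$ is built by peeling off, at each step, the elements of $A$ that are maximal with respect to $\leq\leq$ among what remains, and then discarding those that are $\oplus$-sums of other maximal elements of the same layer. One shows that passing from one layer to the next corresponds to a drop of exactly $1$ in the distance function, i.e. $d\big((I\setminus\bigcup_{j<i}A_{\text{top},j})\setminus (I\setminus\bigcup_{j\le i}A_{\text{top},j})\big)=1$, and that after removing all $\operatorname{NL}(A)$ layers one reaches exactly $E$. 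Summing these unit drops via the additivity of $d$ (Proposition \ref{propBDF}.1) gives $\operatorname{NL}(A)=d(I\setminus E)$.

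\textbf{Key steps, in order.} First I would set up the intermediate ideals: for $k=0,\ldots,N:=\operatorname{NL}(A)$ put $E^{(k)}=I\setminus(A_N\cup A_{N-1}\cup\cdots\cup A_{N-k+1})$, so that $E^{(0)}=I$ and $E^{(N)}=E$; here I use the convention from the construction that $A_N=D^{(1)}$ is the top layer, etc. The first thing to check is that each $E^{(k)}$ is a relative good ideal of $S$: this follows because removing a full level $D^{(i)}$ of elements that are maximal with respect to $\leq\leq$ and are not reconstructible as $\oplus$-sums preserves properties (G1) and (G3) for the remaining set — this is precisely the content of the level machinery in \cite{type:good}, and since all of $I$, $A$, $E$ live inside a common "conductor-stable" region ($\bs c+\N^2$ is contained in both $I$ and $E$), the arguments there go through verbatim with $I$ in the role formerly played by $S$. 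Second, I would show $d(E^{(k-1)}\setminus E^{(k)})=1$ for each $k$: the removed set $A_{N-k+1}=D^{(i)}$ is an antichain of elements maximal with respect to $\leq\leq$ in $E^{(k-1)}\setminus E^{(k)}$... more precisely, I would exhibit a saturated chain in $E^{(k-1)}$ from its minimal element up to a point of $C(I)$ that meets exactly one element not in $E^{(k)}$, using Proposition \ref{propBDF}.3 coordinatewise as in the proof of Proposition \ref{length}. The non-reconstructibility condition ($\bs\alpha\notin C^{(i)}$) is exactly what guarantees the drop is $1$ and not $0$, i.e. that $E^{(k)}\subsetneq E^{(k-1)}$, while maximality with respect to $\leq\leq$ prevents the drop from being larger than $1$. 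Third, telescoping with Proposition \ref{propBDF}.1:
$$d(I\setminus E)=\sum_{k=1}^{N}d(E^{(k-1)}\setminus E^{(k)})=\sum_{k=1}^{N}1=N=\operatorname{NL}(A).$$

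\textbf{Main obstacle.} The delicate part is Step two — establishing that removing a single level decreases $d$ by exactly $1$ — and, underlying it, verifying that each $E^{(k)}$ is genuinely a good relative ideal so that $d(\cdot\setminus\cdot)$ is even defined on these pairs. In \cite{type:good} this is done for $E=S\setminus A$, and the argument uses only that $S$ is good and that $\bs c+\N^2\subseteq E$; the relative-ideal setting here is formally more general but the structural input is the same: $I$ itself need not satisfy an additive stability beyond $I+S\subseteq I$, yet properties (G1), (G3) for $I$ and the fact that $\bs c+\N^2\subseteq E\subseteq I$ are all that the level construction consumes. So I would phrase Step two as a direct adaptation of \cite[proof of Thm. 2.7]{type:good}, checking line by line that "$S$" can be weakened to "$I$ a relative good ideal" throughout, and isolating the one place where locality/the submonoid property of $S$ might have been used (it is used only through $I+S\subseteq I$, which still holds). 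I expect no genuinely new idea to be needed, only this careful relativization, together with the telescoping argument above.
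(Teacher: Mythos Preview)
Your strategy --- re-running the proof of \cite[Thm.~2.7]{type:good} with $I$ playing the role of $S$ --- is a reasonable plan and can be made to work, but it is \emph{not} what the paper does. The paper instead performs a short \emph{reduction} to the already-known semigroup case, avoiding the line-by-line relativization entirely. Concretely: since $I$ is a relative good ideal of $S$, there exists $\bs{\alpha}\in S$ with $\bs{\alpha}+I\subseteq S$; one checks that $S':=(\bs{\alpha}+I)\cup\{\bs{0}\}$ is itself a good semigroup, and that $E':=\bs{\alpha}+E$ is a proper good ideal of $S'$. Now \cite[Thm.~2.7]{type:good} applies directly to the pair $(S',E')$, giving $d(S'\setminus E')=\operatorname{NL}(S'\setminus E')$. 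Both sides pick up a ``$+1$'' from the extra element $\bs{0}$, and both $d$ and $\operatorname{NL}$ are invariant under the translation by $\bs{\alpha}$, so one reads off $d(I\setminus E)=\operatorname{NL}(A)$.

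The trade-off: the paper's argument is a few lines and uses the cited theorem as a black box, at the cost of the small verification that $(\bs{\alpha}+I)\cup\{\bs{0}\}$ is closed under addition (which follows from $\bs{\alpha}+I+I\subseteq S+I\subseteq I$). Your approach is more self-contained but substantially longer, and the ``main obstacle'' you flag is real: to telescope via Proposition~\ref{propBDF}(1) you need each intermediate $E^{(k)}$ to be a \emph{good relative ideal of $S$}, and in particular to satisfy $E^{(k)}+S\subseteq E^{(k)}$. This is not automatic from (G1), (G3) and conductor-stability alone --- if $\bs{\beta}\in A\cap E^{(k)}$ lies in a low level and $\bs{s}\in M$, then $\bs{\beta}+\bs{s}$ could a priori land in a level already removed --- so you would either have to prove this closure directly, or bypass it by arguing that the saturated-chain machinery behind $d$ only needs (G1) and (G3). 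Either is doable, but the paper's translation trick sidesteps the issue completely.
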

\begin{proof}
If $I$ is a relative good ideal of $S$ there exists $\bs{\alpha}\in S$ such that $\bs{\alpha}+I\subseteq S$.
We notice that $(\bs{\alpha}+I)\cup\{0\}$ is a good semigroup. In fact, if $\bs{\alpha}+\bs{i_1},\bs{\alpha}+\bs{i_2}\in \bs{\alpha}+I$,
$$(\bs{\alpha}+\bs{i_1})+(\bs{\alpha}+\bs{i_2})=\bs{\alpha}+(\bs{\alpha}+\bs{i_1}+\bs{i_2})\in \bs{\alpha}+I$$
since $\bs{\alpha}+(\bs{i_1}+\bs{i_2})\in S+I\subseteq I$.
Furthermore, it is easy to check that $\bs{\alpha}+E$ is a proper good ideal of $(\bs{\alpha}+I)\cup\{0\}$. Setting $S'=(\bs{\alpha}+I)\cup\{0\}$, $E'=\bs{\alpha}+E$ and $A'=((\bs{\alpha}+I)\cup\{0\})\setminus (\bs{\alpha}+E)$, by \cite[Thm. 2.7]{type:good}, we have
$$d(S'\setminus E')=\operatorname{NL}(A')$$ 
so we can write:
$$1+d(\bs{\alpha}+I\setminus \bs{\alpha}+E)=d(S'\setminus E')=\operatorname{NL}(A')=1+\operatorname{NL}(\bs{\alpha}+I \setminus \bs{\alpha}+E).$$
We can conclude that:
$$d(I\setminus E)=d(\bs{\alpha}+I\setminus \bs{\alpha}+E)=\operatorname{NL}(\bs{\alpha}+I \setminus \bs{\alpha}+E)=\operatorname{NL}(I\setminus E)=\operatorname{NL}(A).$$
\end{proof}
Using this result we can easily rewrite the genus as the number of levels of $\N^2\setminus S$.

\begin{cor} \label{genlev}
Given a good semigroup $T$, $g(T)=\operatorname{NL}(\N^2\setminus T)$.
\end{cor}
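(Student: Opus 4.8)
The plan is to obtain this as the special case $I=\N^2$ of Proposition \ref{levdis}. First I would check that $\N^2$ is itself a relative good ideal of $T$: it trivially satisfies $\N^2+T\subseteq\N^2$, and $\bs{c(T)}+\N^2\subseteq T$ with $\bs{c(T)}\in T$, so the defining conditions of a relative ideal hold; moreover $\N^2$ vacuously satisfies (G1) and (G3), hence it is a relative good ideal.

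Next I would set $A=\N^2\setminus T$ and $E=T$. Since $T$ is a good semigroup it is in particular a relative good ideal of itself, so $E=\N^2\setminus A$ is a relative good ideal of $T$ contained in $I=\N^2$, exactly as required in the hypotheses of Proposition \ref{levdis}. The condition demanded by Definition \ref{numberlevel} — the existence of $\bs{c}\in\N^2$ with $\bs{c}+\N^2\subseteq\N^2\setminus A=T$ — is satisfied by $\bs{c}=\bs{c(T)}$, directly from the definition of the conductor; so the level partition $A=\bigcup_{i=1}^{\operatorname{NL}(A)}A_i$ is well defined.

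With these choices, Proposition \ref{levdis} applies and gives $\operatorname{NL}(A)=d(I\setminus E)$, that is,
$$\operatorname{NL}(\N^2\setminus T)=d(\N^2\setminus T)=g(T),$$
which is the assertion.

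There is essentially no obstacle: the only point requiring (minimal) care is verifying that the hypotheses of Proposition \ref{levdis} and Definition \ref{numberlevel} are genuinely met when $I=\N^2$ and $E=T$, and all of these follow at once from the existence of the conductor $\bs{c(T)}$. The substance of the statement is already contained in Proposition \ref{levdis}; this corollary is merely its specialization to $I=\N^2$.
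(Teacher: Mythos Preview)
Your proof is correct and follows exactly the same approach as the paper: both apply Proposition \ref{levdis} with $S=T$, $I=\N^2$, $A=\N^2\setminus T$ (hence $E=T$). The paper's proof is terser, omitting the verifications that $\N^2$ and $T$ are relative good ideals of $T$ and that the hypothesis of Definition \ref{numberlevel} holds, which you spell out explicitly.
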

\begin{proof}
It is sufficient to apply Proposition \ref{levdis}, considering $S=T$, $I=\N^2$, $A=\N^2\setminus T$.
\end{proof}

In \cite{type:good}, the set of
\emph{pseudo-frobenius elements}
of a good semigroup
$S$
is defined as $$ \operatorname{PF}(S)=\{\bs{\alpha} \in \N^2 \setminus S \hspace{0.1cm}| \hspace{0.1cm} \bs{\alpha}+M \subseteq S\},$$
where $M=S\setminus \{\bs{0}\}$ is the maximal ideal of $S$.

The set $\operatorname{PF}(S)$ satisfies the condition of the set $A$ in Definition \ref{numberlevel}. The type of the good semigroup $S$ is defined as $t(S)=\operatorname{NL}(\operatorname{PF}(S))$, that is  the number of levels of the pseudo-frobenius elements.
\begin{oss}
	We recall that, given two ideals $E$ and $F$ of $S$, it is possible to consider the set 
	$$ E-F=\{\bs{\alpha} \in \mathbb{Z}^2 \hspace{0.1cm}| \hspace{0.1cm} \bs{\alpha}+F \subseteq E\}.$$
	This set is not in general a good ideal.
	We have that $\operatorname{PF}(S)=(S-M)\setminus S$, thus $t(S)=\operatorname{NL}((S-M)\setminus S)$.
	In \cite[Proposition 3.5]{type:good}, it is proved that if $S-M$ is a good ideal, then $t(S)=d(S-M \setminus S)$ as it was initially defined in \cite{anal:unr}.
\end{oss}

Given a good semigroup $S$, we want to prove the inequality $$ g(S)\leq t(S)l(S), $$
that generalizes the analogous one  proved in 
\cite{froberg1986numerical} for numerical semigroups.

In order to do that we need the following lemma.
\begin{lem} \label{partiz}
Let us consider a subset $A \subseteq \N^2$ such that there exists $\bs{c} \in \N^2$ with $\bs{c}+\N^2 \subseteq \N^2 \setminus A$. Suppose that $A=\bigcup_{j=1}^hB_j$ with $B_l \cap B_m=\emptyset$ if $l\neq m$.
Then $$ \operatorname{NL}(A) \leq \sum_{j=1}^h{\operatorname{NL}(B_j)}.$$
\end{lem}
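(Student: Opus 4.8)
The plan is to prove the inequality by induction on $h$, reducing the general case to the case $h=2$. For $h=2$, write $A = B_1 \cup B_2$ with $B_1 \cap B_2 = \emptyset$. I would compare the level decomposition of $A$ with those of $B_1$ and $B_2$. The key observation is that if $\bs{\alpha} \in A$ lies in level $A_i$ of $A$, then it belongs to some level of $B_1$ or of $B_2$ (whichever of the two sets contains it); I claim that its level index \emph{within that set} is at least $i$ in the sense of being ``no deeper from the top'' than level $i$. Concretely, recall the construction proceeds from the top: $B^{(1)}$ collects the $\leq\leq$-maximal elements of $A$, then $D^{(1)}$ discards those expressible as $\oplus$ of two others in $B^{(1)}$, and one iterates on the remainder. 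I would prove by induction on $i$ that $B^{(i)}(A) \subseteq B^{(i)}(B_1) \cup B^{(i)}(B_2)$, where $B^{(i)}(X)$ denotes the $i$-th set $B^{(i)}$ in the construction applied to $X$; more precisely, that an element surviving to stage $i$ in the construction for $A$ also survives to stage $i$ in the construction for whichever $B_k$ contains it. Since each $B^{(i)}$ is a subset of the ``remaining'' part, being $\leq\leq$-maximal in the larger remaining set $A \setminus \bigcup_{j<i} D^{(j)}$ forces being $\leq\leq$-maximal in the smaller set $B_k \setminus (\text{its own earlier } D^{(j)})$.

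The subtle point is the role of the sets $C^{(i)}$: an element $\bs{\alpha}$ that is $\leq\leq$-maximal at stage $i$ in $A$ might be discarded into $C^{(i)}$ because $\bs{\alpha} = \bs{\beta}_1 \oplus \bs{\beta}_2$ with $\bs{\beta}_1, \bs{\beta}_2$ in $B^{(i)}(A)$ — but those witnesses $\bs{\beta}_1, \bs{\beta}_2$ need not lie in the same $B_k$ as $\bs{\alpha}$, so $\bs{\alpha}$ need not be discarded in the $B_k$-construction. This only helps the inequality: $\bs{\alpha}$ might then sit in $D^{(i)}(B_k)$ rather than being pushed to a deeper level, so its level index in $B_k$ is $\geq$ its level index in $A$ — wait, I need to be careful about orientation. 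The renaming $A_i = D^{(N+1-i)}$ inverts the order, so ``surviving to a deeper stage $i$'' corresponds to a \emph{lower} level index $A_{N+1-i}$. The clean statement to carry through is: if $\bs{\alpha} \in A_p$ (the $p$-th level of $A$, counted from the bottom) and $\bs{\alpha} \in B_k$, then $\bs{\alpha}$ lies in a level $(B_k)_q$ with... I would instead phrase everything in terms of the number of stages, proving $\operatorname{NL}(A) \le \max$ over the relevant comparisons; actually the cleanest route: show that the number of distinct stages $i$ for which $D^{(i)}(A) \neq \emptyset$ is at most the sum over $k$ of the number of stages for $B_k$, by assigning to each nonempty stage of $A$ a nonempty stage of some $B_k$ injectively. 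The assignment sends stage $i$ of $A$ to the smallest stage $i'$ of $B_k$ (for an appropriate $k$) that contains an element of $D^{(i)}(A)$; the monotonicity $i' \le i$ established above, together with the fact that distinct nonempty stages of $A$ cannot all map to the same stage of a single $B_k$, gives the count.

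For the induction step from $h$ to $h+1$, I would set $A' = \bigcup_{j=1}^{h} B_j$ (which still satisfies the hypothesis, being a subset of $A$, using the same $\bs{c}$), apply the $h=2$ case to $A = A' \cup B_{h+1}$ to get $\operatorname{NL}(A) \le \operatorname{NL}(A') + \operatorname{NL}(B_{h+1})$, and then the inductive hypothesis to $A'$ to get $\operatorname{NL}(A') \le \sum_{j=1}^h \operatorname{NL}(B_j)$. Combining yields the claim. The main obstacle is the base case $h=2$: making the comparison of level constructions rigorous requires carefully tracking how the $\oplus$-reducibility test ($C^{(i)}$) interacts with restricting to a subset, and confirming that it can only cause elements to appear in \emph{shallower} (higher-index) levels of $B_k$, never deeper ones — and then translating this monotonicity of stages into the inequality on the \emph{number} of levels via an injective assignment of stages. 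I expect one needs the elementary fact (implicit in the construction) that an element $\leq\leq$-maximal in a set remains $\leq\leq$-maximal in any subset containing it, applied repeatedly down the stages.
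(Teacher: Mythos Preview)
Your central claim --- that an element surviving to stage $i$ of the $A$-construction also survives to stage $i$ of the $B_k$-construction --- is false. Take $A=\{(1,1),(1,3),(2,2)\}$ with $B_1=\{(1,1),(1,3)\}$ and $B_2=\{(2,2)\}$. In $A$, the element $(1,1)$ is not $\leq\leq$-maximal (since $(1,1)\ll(2,2)$), so it sits in $D^{(2)}(A)$; but in $B_1$ both elements are $\leq\leq$-maximal and $D^{(1)}(B_1)=B_1$, so $(1,1)$ is already removed at stage~$1$. You sense this yourself (``wait, I need to be careful about orientation'') and retreat to an injective stage-assignment, but that argument is incomplete: you never specify how to choose ``the appropriate $k$'', and the assertion that distinct stages of $A$ cannot map to the same stage of a single $B_k$ is exactly what needs proof. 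In the example above, both $D^{(1)}(A)$ and $D^{(2)}(A)$ meet $B_1$ in elements lying at the same (unique) stage of $B_1$; an injective assignment exists only because one can redirect stage~$1$ to $B_2$, and you give no rule that guarantees such a rerouting is always possible.

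The paper avoids comparing the two level decompositions altogether. Instead it builds a single chain $\bs{\alpha_1}\leq\cdots\leq\bs{\alpha_n}$ with $\bs{\alpha_i}\in A_i$, choosing each $\bs{\alpha_{i+1}}$ as follows: if some element of $A_{i+1}$ lies strictly above $\bs{\alpha_i}$ (in the $\ll$ sense), take one; otherwise, a lemma from \cite{type:good} guarantees that both $\Delta_1(\bs{\alpha_i})\cap A_{i+1}$ and $\Delta_2(\bs{\alpha_i})\cap A_{i+1}$ are nonempty, and one picks $\bs{\alpha_{i+1}}$ in $\Delta_k(\bs{\alpha_i})\cap A_{i+1}$ for a $k$ such that $\Delta_k(\bs{\alpha_i})$ misses the level $B_{j,l}$ containing $\bs{\alpha_i}$ (such $k$ exists because a level cannot contain $\bs{\alpha_i}$ together with elements in both $\Delta_1(\bs{\alpha_i})$ and $\Delta_2(\bs{\alpha_i})$). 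This guarantees that no two $\bs{\alpha_i}$'s share a level of any $B_j$, whence $n\leq\sum_j \operatorname{NL}(B_j)$. If you want to salvage your approach, you would need a genuine bijective/Hall-type argument for the injective assignment; the paper's chain construction sidesteps this entirely.
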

\begin{proof}

We denote by $n=\operatorname{NL}(A)$ and by $m_j=\operatorname{NL}(B_j)$. Furthermore we write 
$$ A=\bigcup_{i=1}^n{A_i}, \qquad B_j=\bigcup_{l=1}^{m_j}{B_{j,l}} \textrm{ for } j=1,\ldots,h.$$
We want to find  a chain $$ \bs{\alpha_1} \leq \ldots \leq\bs{\alpha_n}, 
\textrm{ where } \bs{\alpha_i} \in A_i \textrm{ for all }i=1,\ldots,n,$$ and  such that each $B_{j,l}$ contains at most one of the $\bs{\alpha_i}$'s.
In order to do that we consider $\bs{\alpha_1}$ as an arbitrary element of $A_1$, then we choose $\bs{\alpha_i}$, with $i\geq2$, by taking into account the following rule. We denote by $D=A_{i+1} \cap \{\bs{\beta} \in A| \bs{\alpha_i} \ll \bs{\beta} \}$.
\\
\textbf{Case 1:} If $D$ is not empty, then we choose as $\bs{\alpha_{i+1}}$ an arbitrary element of $D$.
\\
\textbf{Case 2:} If $D=\emptyset$, then \cite[Lemma 2.4 (1)]{type:good} ensures that $\Delta_1(\bs{\alpha_i}) \cap A_{i+1}$ and 
$\Delta_2(\bs{\alpha_i}) \cap A_{i+1}$ are both non-empty.
Furthermore, if we suppose that $\bs{\alpha_i} \in B_{j,l}$, then there must exist a $k \in \{1,2\}$ such that $\Delta_k(\bs{\alpha_i}) \cap B_{j,l}=\emptyset$. In fact, otherwise, we would have $\bs{\alpha_i}=\bs{\beta_1}\oplus \bs{\beta_2}$, with $\bs{\alpha_i},\bs{\beta_1},\bs{\beta_2} \in B_{j,l}$ that is a contradiction since $B_{j,l}$ is a level of $B_j$ and cannot contain such a configuration.
Thus, in this case we choose an element of $\Delta_k(\bs{\alpha_i}) \cap A_{i+1}$ as $\bs{\alpha_{i+1}}$.

By construction and by the properties of the levels, it is clear that it is not possible to find $l \in \{1,\ldots,m_j\}$ and $j \in \{ 1,\ldots,h\}$ such that $|B_{j,l} \cap \{\bs{\alpha_1}, \ldots, \bs{\alpha_n} \}| \geq 2.$ 

Thus $\operatorname{NL}(A) \leq \sum_{j=1}^h{\operatorname{NL}(B_j)}$ as required.
\end{proof}
Now we are ready to prove the main result of this subsection.
\begin{prop}
\label{boundsup}
	Let $S$ be a good semigroup. Then 
	$$ g(S) \leq t(S)l(S).$$
\end{prop}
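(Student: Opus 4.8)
The plan is to reduce the inequality $g(S) \le t(S)l(S)$ to a statement about numbers of levels and then apply Lemma \ref{partiz}. By Corollary \ref{genlev}, $g(S) = \operatorname{NL}(\N^2 \setminus S)$, so it suffices to exhibit a decomposition of $\N^2 \setminus S$ into at most $l(S)$ pieces, each with at most $t(S)$ levels. The natural candidate for the pieces is built from translates of the pseudo-frobenius set: for each small element $\bs{s} \in \operatorname{Small}(S)$ (there are, morally, $l(S)$ of them in the sense measured by the distance function), consider the set $(\bs{s} + \operatorname{PF}(S)) \cap (\N^2 \setminus S)$, or some closely related set. Each such translate is an affine image of $\operatorname{PF}(S)$, hence has the same number of levels $t(S)$ by the translation-invariance built into Proposition \ref{levdis} (translation by an element of $S$ preserves the level structure, as that proof shows). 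So if these translates cover $\N^2 \setminus S$, Lemma \ref{partiz} gives $\operatorname{NL}(\N^2 \setminus S) \le \sum \operatorname{NL}(\text{piece}) \le t(S) \cdot (\text{number of pieces})$.

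The first real task is to make precise which translates to use and why they cover $\N^2 \setminus S$. Given $\bs{\gamma} \in \N^2 \setminus S$, I would find a small element $\bs{s}$ and a pseudo-frobenius element $\bs{f}$ with $\bs{\gamma} = \bs{s} + \bs{f}$, or more carefully, argue that $\bs{\gamma}$ lies in a level-controlled translate. The key structural fact to invoke is that $\bs{f} \in \operatorname{PF}(S)$ means $\bs{f} + M \subseteq S$, and one wants to "walk down" from a gap $\bs{\gamma}$ by subtracting elements of $M$ until one reaches a pseudo-frobenius element, keeping track of how far one has walked — that distance should be bounded by $l(S)$. This is exactly the analogue of the numerical-semigroup argument in \cite{froberg1986numerical}: there one writes a gap as (small element) + (pseudo-frobenius element), using that $n(S) = l(S)$ counts small elements.

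The second task is the counting of pieces. I would index the translating set not by literal small elements but by a chain realizing $d(S \setminus C(S)) = l(S)$, or equivalently use the level decomposition: since the relevant covering is by sets of the form $\bs{\sigma} + \operatorname{PF}(S)$ for $\bs{\sigma}$ ranging over a suitable subset of $S$ of size controlled by $l(S)$, one must ensure disjointness (or at least that Lemma \ref{partiz}, which only needs a partition, applies — so I may need to disjointify the cover, which is harmless since removing overlaps only decreases the number of levels of each piece). Concretely: partition $\N^2 \setminus S = \bigsqcup_{\bs{\sigma}} B_{\bs{\sigma}}$ where $B_{\bs{\sigma}} \subseteq \bs{\sigma} + \operatorname{PF}(S)$, so $\operatorname{NL}(B_{\bs{\sigma}}) \le \operatorname{NL}(\bs{\sigma} + \operatorname{PF}(S)) = t(S)$ (using that a subset of a level-$k$ set has at most $k$ levels, together with translation invariance), and the index set has size at most $l(S)$.

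The main obstacle I anticipate is precisely the covering claim: showing every gap of $S$ can be written as $\bs{\sigma} + \bs{f}$ with $\bs{\sigma}$ drawn from a set whose "size" in the $d(-\setminus-)$ sense is $l(S)$, and $\bs{f} \in \operatorname{PF}(S)$. In the numerical case this is the clean identity (gap) $=$ (small element) $+$ (pseudo-Frobenius), but in $\N^2$ the partial order and property (G3) complicate the "walking down" argument — subtracting an element of $M$ from a gap need not land in $\N^2$, and one must handle the two coordinate directions separately, possibly subtracting along one axis at a time. I would likely need a lemma saying: for $\bs{\gamma} \notin S$, either $\bs{\gamma} \in \operatorname{PF}(S)$ or there is $\bs{m} \in M$ with $\bs{\gamma} - \bs{m} \notin S$ and $\bs{\gamma} - \bs{m} \ge \bs{0}$, and that iterating this takes at most $l(S)$ steps, the bound coming from comparing with the conductor ideal via Remark \ref{genalt} ($g(S) = c_S - l(S)$) or via a direct chain-length argument. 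Assembling that carefully, and checking the disjointification does not break the level bounds, is where the work lies; everything else is a formal application of Corollary \ref{genlev} and Lemma \ref{partiz}.
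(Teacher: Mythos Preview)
Your high-level plan matches the paper's exactly: write $g(S)=\operatorname{NL}(\N^2\setminus S)$, partition $\N^2\setminus S$ into $l(S)$ pieces each embedding into a translate of $\operatorname{PF}(S)$, and apply Lemma~\ref{partiz}. The divergence is in how the pieces are built, and your proposed mechanism has a real obstruction.

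The paper does \emph{not} cover gaps by translates $\bs{\sigma}+\operatorname{PF}(S)$ or walk down from gaps. Instead it fixes a saturated chain $\bs{0}=\bs{\alpha_0}<\cdots<\bs{\alpha_n}=\bs{c(S)}$ with $n=l(S)$ and takes the filtration of ideals
\[
S\subseteq S-S(\bs{\alpha_1})=S-M\subseteq\cdots\subseteq S-S(\bs{\alpha_n})=S-C(S)=\N^2,
\]
so the pieces are the successive differences $(S-S(\bs{\alpha_i}))\setminus(S-S(\bs{\alpha_{i-1}}))$; there are exactly $l(S)$ of them by construction. The substantive work is then proving that translation by a carefully chosen $\tilde{\bs{\alpha}}_{i-1}$ (equal to $\bs{\alpha_{i-1}}$ when $\bs{\alpha_i}\notin\Delta^S(\bs{\alpha_{i-1}})$, but equal to $\max\Delta_{3-k}^S(\bs{\alpha_{i-1}})\oplus\bs{c(S)}$ when $\bs{\alpha_i}\in\Delta_k^S(\bs{\alpha_{i-1}})$) sends each piece injectively into $(S-M)\setminus S=\operatorname{PF}(S)$. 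Verifying that $\bs{\beta}+\tilde{\bs{\alpha}}_{i-1}\in S-M$ and $\bs{\beta}+\tilde{\bs{\alpha}}_{i-1}\notin S$ requires a case analysis using (G3) and the saturatedness of the chain; the naive choice $\bs{\alpha_{i-1}}$ does not work in the second case.

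Your walking-down proposal runs into the problem you yourself flag but do not resolve: even if every gap $\bs{\gamma}$ were expressible as $\bs{\sigma}+\bs{f}$ with $\bs{\sigma}\in S$ and $\bs{f}\in\operatorname{PF}(S)$, nothing forces the $\bs{\sigma}$'s produced by iterated subtraction of elements of $M$ to lie on a single chain of length $l(S)$. In $\N$ the small elements literally number $l(S)$, so any $\bs{\sigma}$ works; in $\N^2$ the cardinality of $\operatorname{Small}(S)$ is typically much larger than $l(S)$, and the walk can land anywhere among them. You note that one should ``index by a chain realizing $l(S)$'' but give no mechanism linking a given gap to a specific chain element---that link is precisely what the ideal filtration provides, and the modified translation vector $\tilde{\bs{\alpha}}_{i-1}$ is the non-obvious ingredient you are missing.
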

\begin{proof}
	We denote by $n=l(S)$ and we choose $$ \bs{0}=\bs{\alpha_0}\leq \bs{\alpha_1}\leq \ldots \leq \bs{\alpha_n}=\bs{c(S)},$$
	an arbitrary saturated chain in $S$ between $\bs{0}$ and $\bs{c(S)}$. 
	We consider the following chain of ideals of $S$:
	$$ S \subseteq S-S(\bs{\alpha_1})=S-M \subseteq S-S(\bs{\alpha_2})\subseteq \ldots \subseteq S-S(\bs{\alpha_n})=S-C(S)=\N^2.$$
	
	We have $$ \N^2 \setminus S= \bigcup_{i=1}^n{(S-S(\bs{\alpha_i}))\setminus(S-S(\bs{\alpha_{i-1}}))},$$
	and by Lemma \ref{partiz} and Corollary \ref{genlev} we can deduce:
	\begin{equation} \label{ineq}
	g(S)= \operatorname{NL}(\N^2\setminus S) \leq \sum_{i=1}^n{\operatorname{NL}((S-S(\bs{\alpha_i}))\setminus(S-S(\bs{\alpha_{i-1}})))}.
	\end{equation} 
	Now we claim that \begin{eqnarray*}\operatorname{NL}((S-S(\bs{\alpha_i}))\setminus(S-S(\bs{\alpha_{i-1}}))) &\leq& \operatorname{NL}((S-S(\bs{\alpha_1}))\setminus(S-S(\bs{\alpha_{0}})))=\\ && \operatorname{NL}((S-M) \setminus S)=t(S),\end{eqnarray*} \textrm{ for all } $i=2,\ldots,n.$\\
	For each $i \in \{2,\ldots,n\},$ we denote by $$ 	\tilde{\bs{\alpha}}_{i-1}=\begin{cases}
\bs{\alpha_{i-1}}, &\textrm{ if } \bs{\alpha_i} \notin \Delta^S(\bs{\alpha_{i-1}}) \\
	 \max\{\Delta_{3-k}^S(\bs{\alpha_{i-1}})\}\oplus \bs{c(S)},& \textrm{ if } \bs{\alpha_i} \in \Delta_{k}^S(\bs{\alpha_{i-1}})  \end{cases},$$
	and we consider the following function
	\begin{eqnarray*}
                f:(S-S(\bs{\alpha_i}))\setminus(S-S(\bs{\alpha_{i-1}})) &\xhookrightarrow{}&  (S-M) \setminus S \\
                \bs{\gamma} &\to& \bs{\gamma}+\tilde{\bs{\alpha}}_{i-1}.
	\end{eqnarray*}
The function $f$ is clearly injective, thus in order to prove our claim we need only to show that it is well defined.

Thus we fix an arbitrary $\bs{\beta} \in (S-S(\bs{\alpha_i}))\setminus(S-S(\bs{\alpha_{i-1}}))$ and we prove that $\bs{\beta}+\tilde{\bs{\alpha}}_{i-1} \in (S-M) \setminus S$.
\begin{enumerate}
	\item We prove that $\bs{\beta}+\tilde{\bs{\alpha}}_{i-1} \in S-M$.
	Let us consider an element $\bs{\gamma}\in M=S(\bs{\alpha_1})$. We need to show that $\bs{\beta}+\tilde{\bs{\alpha}}_{i-1} +\bs{\gamma} \in S$.
    Since $\bs{\beta} \in S-S(\bs{\alpha_i})$, it suffices to notice that 
	$\tilde{\bs{\alpha}}_{i-1} +\bs{\gamma} \geq \bs{\alpha_i}$.
	In fact, otherwise $(\tilde{\bs{\alpha}}_{i-1} +\bs{\gamma} ) \oplus \bs{\alpha_i}=\bs{\delta} \in S$ would be an element such that $\bs{\alpha_{i-1}}< \bs{\delta} < \bs{\alpha_i}$, since $\bs{\alpha_{i-1}}\leq \tilde{\bs{\alpha}}_{i-1}\leq \tilde{\bs{\alpha}}_{i-1}+\bs{\gamma}$ and $\bs{\alpha_{i-1}}\leq \bs{\alpha_i}$. But this contradicts the fact
	that we considered a saturated chain in $S$.
	\item We prove that $\bs{\beta}+\tilde{\bs{\alpha}}_{i-1} \notin S$. Let us assume, by contradiction, that  $\bs{\beta}+\tilde{\bs{\alpha}}_{i-1} \in S$.
	We consider $D:=S(\bs{\alpha_{i-1}})\setminus S(\bs{\alpha_{i}})$.
	We have two cases:
	\\
	\textbf{Case 1:} $\bs{\alpha_i} \notin \Delta^S(\bs{\alpha_{i-1}})$.
	In this case we have $D=\{ \bs{\alpha_{i-1}}=\tilde{\bs{\alpha}}_{i-1}\}$. In fact, if $\bs{\beta} \in D$ and $ \bs{\beta}\neq \bs{\alpha_{i-1}}$, it would follow $ \bs{\alpha_{i-1}}< \bs{\beta}\oplus \bs{\alpha_i} < \bs{\alpha_i}$, against the fact that $\bs{\alpha_{i-1}}$ and $\bs{\alpha_{i}}$ are consecutive in $S$.
	Thus $\bs{\beta}+\tilde{\bs{\alpha}}_{i-1} \in S $ implies $\bs{\beta} \in (S-S(\bs{\alpha_{i-1}}))$ that is a contradiction.
	\\
	\textbf{Case 2:} There exists $k \in \{1,2\}$ such that $\bs{\alpha_i} \in \Delta_{k}^S(\bs{\alpha_{i-1}})$.
	It is easy to notice that in this case $D:=S(\bs{\alpha_{i-1}})\setminus S(\bs{\alpha_{i}})=\Delta_{3-k}^S(\bs{\alpha_{i-1}}) \cup \{\bs{\alpha_{i-1}}\}$.
	Let us consider an arbitrary $\bs{\gamma} \in D$ and let us show that $\bs{\beta}+\bs{\gamma} \in S$.
	If $\bs{\gamma}  \in \Delta_{3-k}^S( \tilde{\bs{\alpha}}_{i-1})$, then $\bs{\beta}+\bs{\gamma}  \in \Delta_{3-k}( \bs{\beta}+\tilde{\bs{\alpha}}_{i-1})$.
	We notice that, by definition of $\tilde{\bs{\alpha}}_{i-1}$, in this case $ \bs{c(S)} \in \Delta_{k}^S(\tilde{\bs{\alpha}}_{i-1})$. Since $\bs{\beta}+\tilde{\bs{\alpha}}_{i-1} \in S$, it easily follows $\bs{\beta}+\bs{\gamma}  \in \Delta_{3-k}(\bs{\beta}+\tilde{\bs{\alpha}}_{i-1}) \subseteq S$.
	If $\bs{\gamma} < \tilde{\bs{\alpha}}_{i-1}$, then property (G3) implies that there must exist a $\bs{\delta} \in \Delta_k^S(\bs{\gamma})$. Furthermore $\bs{\delta} \in S(\bs{\alpha_{i}})$, (otherwise $\bs{\alpha_{i-1}}< \bs{\delta}\oplus \bs{\alpha_i} < \bs{\alpha_i}$ ).
	Since $\bs{\beta} \in S-S(\bs{\alpha_i})$, we have $\bs{\beta}+\bs{\delta} \in S$.
	
	Finally we have:
	$$ \bs{\gamma}+\bs{\beta}=\overbrace{(\bs{\beta}+\bs{\delta})}^{\in S} \oplus \overbrace{(\bs{\beta}+\tilde{\bs{\alpha}}_{i-1})}^{\in S} \in S. $$
	Thus, also in this case, we deduce $\bs{\beta} \in (S-S(\bs{\alpha_{i-1}}))$ that is a contradiction.
 
\end{enumerate}	This means that $\bs{\beta}+\tilde{\bs{\alpha}}_{i-1} \notin S$ and the claim is proved.
Finally, by expression (\ref{ineq}) and the claim, it follows:
\begin{eqnarray*}
  g(S)&\leq&  \sum_{i=1}^n{\operatorname{NL}((S-S(\bs{\alpha_i}))\setminus(S-S(\bs{\alpha_{i-1}})))}\\ &\leq& \sum_{i=1}^{l(S)}(\operatorname{NL}((S-M)\setminus S))=t(S)l(S),
 \end{eqnarray*}
 and the proof is complete.
\end{proof}
Given a good semigroup $S$, its canonical ideal is the set $K=\{\bs{\alpha}\in \N^2|\Delta^S(\bs{\gamma}-\bs{\alpha})=\emptyset\}$. In \cite[Prop 2.17.]{anal:unr} it is proved that $g(S)\geq d(K\setminus C(S))$.\\
We observe that, if $S-M$ is a good ideal, $t(S)=d((S-M)\setminus S)$. Since $S-M\subseteq K\cup \Delta(\bs{\gamma})$, we have:
$$t(S)\leq d(K\cup \Delta(\bs{\gamma})\setminus S)\leq d(K\cup \Delta(\bs{\gamma})\setminus K)+d(K\setminus S)=1+d(K\setminus S)=$$
$$=d(K\setminus C(S))-d(S\setminus C(S))+1\leq g(S)-l(S)+1$$
Using Proposition \ref{levdis} it is possible to prove this inequality for any good semigroup $S$, also if $S-M$ is not a good semigroup:
\begin{cor}
\label{boundinf}
Given a good semigroup $S$, $g(S)\geq t(S)+l(S)-1$.
\end{cor}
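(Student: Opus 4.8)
The plan is to recover the argument displayed in the paragraph immediately preceding the statement — which already establishes $g(S)\ge t(S)+l(S)-1$ under the extra hypothesis that $S-M$ is a good ideal — and to remove that hypothesis. The only place the hypothesis was used there is the identity $t(S)=d((S-M)\setminus S)$; in general one has, by definition of the type and of $\operatorname{PF}(S)$, only $t(S)=\operatorname{NL}(\operatorname{PF}(S))=\operatorname{NL}((S-M)\setminus S)$. So it suffices to check that the chain of (in)equalities of that paragraph still goes through when one starts from $\operatorname{NL}$ instead of $d$. The tools available for this are: Proposition~\ref{levdis}, which says that $\operatorname{NL}$ coincides with $d$ on proper good ideals; Lemma~\ref{partiz}, the subadditivity of $\operatorname{NL}$ over disjoint decompositions; and the facts about the canonical ideal $K$ recalled in the excerpt, namely $C(S)\subseteq S\subseteq K$, $S-M\subseteq K\cup\Delta(\bs{\gamma})$, the "one step" identity $d(K\cup\Delta(\bs{\gamma})\setminus K)=1$, and $g(S)\ge d(K\setminus C(S))$ from \cite[Prop.~2.17]{anal:unr}.

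A preliminary step I would isolate is the \textbf{monotonicity of $\operatorname{NL}$ under inclusion}: if $A\subseteq B$ are both of the type considered in Definition~\ref{numberlevel}, then $\operatorname{NL}(A)\le\operatorname{NL}(B)$. This should follow by exactly the chain-propagation used in the proof of Lemma~\ref{partiz}: starting from an arbitrary element of the bottom level of $A$, one walks upward through the levels of $B$, at each step choosing the next element either strictly above (Case 1 there) or, when that is impossible, in $\Delta_k(\cdot)\cap(\text{next level of }B)$ via \cite[Lemma~2.4(1)]{type:good} and property (G3) (Case 2), in such a way that each level of $A$ is met at most once; the resulting $\leq\leq$-chain has length $\operatorname{NL}(A)$ and lies in distinct levels of $B$, giving $\operatorname{NL}(A)\le\operatorname{NL}(B)$. (Alternatively one can deduce it from Lemma~\ref{partiz} applied to a suitable refinement $B=A\cup(B\setminus A)$, but the direct argument is cleaner.)

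Granting this, the estimate runs as in the excerpt. Since $(S-M)\setminus S\subseteq (K\cup\Delta(\bs{\gamma}))\setminus S$, monotonicity gives $t(S)=\operatorname{NL}\big((S-M)\setminus S\big)\le\operatorname{NL}\big((K\cup\Delta(\bs{\gamma}))\setminus S\big)$. Decomposing $(K\cup\Delta(\bs{\gamma}))\setminus S$ as the disjoint union $\big((K\cup\Delta(\bs{\gamma}))\setminus K\big)\cup(K\setminus S)$ and applying Lemma~\ref{partiz},
$$ t(S)\le \operatorname{NL}\big((K\cup\Delta(\bs{\gamma}))\setminus K\big)+\operatorname{NL}(K\setminus S)=1+\operatorname{NL}(K\setminus S), $$
where the value $1$ of the first term is the content of the identity $d(K\cup\Delta(\bs{\gamma})\setminus K)=1$ used in the excerpt (and equals $\operatorname{NL}$ of that set since it is the difference of two good relative ideals, by Proposition~\ref{levdis}). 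Now $K$ and $C(S)$ are good relative ideals of $S$ with $C(S)\subseteq S\subseteq K$, so Proposition~\ref{levdis} (applied with $I=K$, $E=S$, $A=K\setminus S$, and again with $E=C(S)$) yields $\operatorname{NL}(K\setminus S)=d(K\setminus C(S))-d(S\setminus C(S))=d(K\setminus C(S))-l(S)$, and finally $d(K\setminus C(S))\le g(S)$ by \cite[Prop.~2.17]{anal:unr}. Putting the pieces together, $t(S)\le 1+g(S)-l(S)$, i.e. $g(S)\ge t(S)+l(S)-1$.

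The main obstacle is the monotonicity of $\operatorname{NL}$ just discussed: it is intuitively obvious but must be proved from the level construction, and some care is needed to guarantee that the chain built inside the larger set actually lands in consecutive levels of that set. A minor additional point is the degenerate case $S=K$ (the symmetric case), where $K\setminus S=\emptyset$ and Proposition~\ref{levdis} does not literally apply, but there $\operatorname{NL}(K\setminus S)=0=d(K\setminus C(S))-l(S)$ holds trivially, so this case can be dispatched at once.
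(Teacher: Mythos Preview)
Your proposal is correct and follows essentially the same route as the paper: the paper also passes from $t(S)=\operatorname{NL}(\operatorname{PF}(S))$ to $\operatorname{NL}\big((K\setminus S)\cup\Delta(\bs{\gamma})\big)$ via monotonicity of $\operatorname{NL}$ (which it cites as \cite[Lemma~3.6]{type:good} rather than reproving), then applies Lemma~\ref{partiz} and Proposition~\ref{levdis} to reach $d(K\setminus S)+1\le g(S)-l(S)+1$. One small correction: your parenthetical alternative for monotonicity---applying Lemma~\ref{partiz} to $B=A\sqcup(B\setminus A)$---yields $\operatorname{NL}(B)\le\operatorname{NL}(A)+\operatorname{NL}(B\setminus A)$, which is the wrong direction, so stick with your direct chain argument.
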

\begin{proof}
Since $\operatorname{PF}(S)\subseteq (K\setminus S)\cup \Delta(\bs{\gamma})$, by \cite[Lemma 3.6]{type:good} and Lemma \ref{partiz}, we deduce
\begin{eqnarray*}
t(S)&=&\operatorname{NL}(\operatorname{PF}(S))\leq \operatorname{NL}((K\setminus S)\cup \Delta(\bs{\gamma}))\\&\leq& \operatorname{NL}((K\setminus S))+ \operatorname{NL}( \Delta(\bs{\gamma})) \leq  \operatorname{NL}(K\setminus S)+1
\end{eqnarray*}
If we apply the Proposition \ref{levdis}, considering $I=K$, $A=K\setminus S$, $E=S$, we have:
$$\operatorname{NL}(K\setminus S)+1=d(K\setminus S)+1=d(K\setminus C(S))-d(S\setminus C(S))+1\leq g(S)-l(S)+1.$$
\end{proof}
\subsection{On the Wilf Conjecture}
In \cite{wilf1978circle}, it was firstly introduced the well known Wilf conjecture regarding the numerical semigroups and in \cite{dobbs2003question} it was slightly rephrased. It states that the number of minimal generators of a numerical semigroup $S$, i.e. the embedding dimension of the semigroup, always satisfies the inequality
 $$ \operatorname{edim}(S) \geq \frac{c(S)}{c(S)-g(S)}, $$
 where $c(S)$ and $g(S)$ are respectively the conductor and the genus of the semigroup.
 
 The Wilf conjecture represents an important open problem in the context of the numerical semigroups theory, and it has been proved for many special cases \cite{froberg1986numerical}, \cite{kaplan2012counting}, \cite{sammartano2012numerical}, \cite{moscariello2015conjecture}, \cite{eliahou2017wilf}, \cite{bruns2019wilf}  and checked for numerical semigroups up to genus $50$ in (\cite{Bras-Amoros2008}) and up to genus $60$ in \cite{fromentin2016exploring}.
 
In \cite{emb-NG} the concept of embedding dimension of a good semigroup of $\N^2$ has been introduced, therefore, now it makes sense to extend in a natural way the conjecture to good semigroups.
Specifically we want to check if, for a good semigroup $S \subseteq \N^d$ of genus $g(S)$, the inequality 
$$ \operatorname{edim}(S) \geq \frac{c_S}{c_S-g(S)},$$
always holds, where $c_S$ is defined as in Section \ref{sec1}.

By exploring the tree of good semigroups of $\N^2$, introduced in the previous section, it is possible to check that the conjecture is satisfied for semigroups up to genus $22$.
However, starting from genus $23$, examples where the conjecture is not verified begin to show up, as we can see in the following case.
\begin{ex}
\label{countwilf}
We consider the good semigroup $S$, represented by the following set of small elements:
\begin{eqnarray*}\operatorname{Small}(S)&=&\{ (0,0), ( 4, 3 ), ( 8, 6 ), ( 8, 9 ), ( 8, 11 ), ( 8, 12 ), ( 8, 13 ), ( 8, 14 ), ( 9, 6 ),\\
&& ( 12, 9 ), ( 12, 11 ), 
  ( 12, 12 ),( 12, 13 ), ( 12, 14 ), ( 13, 9 ), ( 13, 11 ),\\
  && ( 15, 9 ), ( 16, 12 ), ( 16, 13 ), ( 16, 14 ),
  ( 17, 12 ), ( 17, 13 ), ( 17, 14 ),\\ 
  &&( 18, 12 ), ( 18, 13 ), ( 19, 12 ), ( 20, 14 )
\}. \end{eqnarray*}
We have $c_S=34$ and $g(S)=23$.
Using the algorithms presented in \cite{emb-NG}, it is possible to check that $\operatorname{edim}(S)=3$ (according to the terminology introduced in that paper, the set $\{(4,3),(8,\infty),(13,11)\}\subseteq I_A(S)$ constitutes a minimal set of representatives for $S$).

Finally we have: 
$$ 3=\operatorname{edim}(S)<\frac{c_S}{c_S-g(S)}=\frac{34}{11},$$
disproving the Wilf conjecture for good semigroups of $\N^2$.

\end{ex}
  \begin{oss}
  It still makes sense to ask whether the Wilf conjecture is true for good semigroups that are value semigroups.
  In fact, at the moment, there are no known examples of value semigroups disproving the conjecture, since for all the known counterexamples it seems impossible to find suitable rings having them as value semigroups.
  This fact may suggest that the Wilf conjecture is more related to the structure inherited from the rings than on the combinatorical properties of these objects.
  \end{oss}
\newpage
 \begin{acknowledgements}
	The authors would like to thank Marco D'Anna, Felix Delgado, Manuel Delgado and Pedro A. García Sánchez, for their helpful comments and suggestions during the development of this paper. Furthermore, they  want to thank the anonymous referees whose suggestions have been crucial to improve the work.
	The first author is supported by the projects MTM2014-55367-P, which is funded by Ministerio de Economía y Competitividad and Fondo Europeo de Desarrollo Regional FEDER, and by the Junta de Andalucía Grant Number FQM-343. He also thanks the "University of Granada" for hosting him and providing the machines necessary to complete some of the computations reported on the paper. Both the authors gratefully acknowledge support by the project "Proprietà algebriche locali e globali di anelli associati a curve e ipersuperfici" PTR 2016-18 - Dipartimento di Matematica e Informatica - Università di Catania".
\end{acknowledgements}

	\end{document}